\newtheorem{problem}{Problem}
\newtheorem{theorem}{Theorem}
\newtheorem{corollary}[theorem]{Corollary}
\newtheorem{lemma}[theorem]{Lemma}
\newcommand{\eps}{\varepsilon}
\newcommand{\brm}[1]{\operatorname{#1}}
\newcommand{\bb}[1]{\mathbb{#1}}
\DeclareMathOperator*{\argmax}{arg\,max} 
\newcommand{\sgn}{\brm{sgn}}
\title{List coloring with requests}
\author{Zden\v{e}k Dvo\v{r}\'ak\thanks{Charles University, Prague, Czech Republic.
E-mail: {\tt rakdver@iuuk.mff.cuni.cz}. Supported by project 14-19503S (Graph coloring and structure) of Czech Science Foundation.}\\
\and Sergey Norin\thanks{Department of Mathematics and Statistics, McGill University. Email: {\tt snorin@math.mcgill.ca}. Supported by an NSERC grant 418520.}
\\ \and Luke Postle
\thanks{Department of Combinatorics and Optimization,  University of Waterloo. Email: {\tt lpostle@uwaterloo.ca}. Canada Research Chair in Graph Theory. Partially supported by NSERC under Discovery Grant No. 2014-06162, the Ontario Early Researcher Awards program and the Canada Research Chairs program.}
}
\date{}
\begin{document}
\maketitle

\begin{abstract}
Let $G$ be a graph with a list assignment $L$.  Suppose a preferred color is given for some of the
vertices; how many of these preferences can be respected when $L$-coloring $G$? We explore several
natural questions arising in this context, and propose directions for further research.
\end{abstract}

\section{Introduction}

In the precoloring extension problem, one seeks a proper coloring of a graph subject
to some of the vertices having prescribed colors.  This notion appears in
several contexts.  For example, in algorithmic design, a graph being colored may be cut up into
pieces on small vertex cuts, and afterwards one seeks a coloring of the cut vertices
that extends into the pieces~\cite{arnborg1989linear,trfree7,Th2}.  Also, being able to
extend an arbitrary precoloring of a large set of vertices implies existence of
a large number of distinct colorings~\cite{cylgen-part2,lukethe}.

Suppose that the given precoloring does not extend.  Does there at least exist a coloring which matches
the precoloring on many vertices (say, on a constant fraction of the precolored vertices)?  This question is in spirit similar to
various MAX-SAT style constraint satisfaction problems~\cite{cohen2004complete}, seeking the
largest possible number of simultaneously satisfiable constraints of an overdetermined system.
The question is also motivated by a result of Dvo\v{r}\'ak and Sereni~\cite{dser}.
Given a planar graph $G$ and a set $X$ of its edges intersecting
all triangles, is it possible to $3$-color the graph $G-X$ so that a constant fraction of edges of $X$
join vertices of different colors? Dvo\v{r}\'ak and Sereni~\cite{dser} proved that this is equivalent
to a conjecture of Thomassen~\cite{thom-many} that planar triangle-free graphs have exponentially many colorings.

Coming back to our question of satisfying a precoloring on a constant fraction of precolored vertices,
for ordinary proper coloring, the answer is always positive as long
as any coloring of the graph using the fixed number of colors exists. However, the reason why this is the
case is rather unsatisfactory and gives no insight into the chromatic properties of the graph:
suppose that $G$ is a $k$-colorable graph and $r$ is a function assigning preferred colors to some of the
vertices of $G$.  By permuting the colors in a $k$-coloring of $G$, we can easily obtain a $k$-coloring
of $G$ that matches $r$ on at least $|\brm{dom}(r)|/k$ vertices.

This triviality can be avoided by considering list coloring, where the symmetry among the colors disappears.
Let us recall that a \emph{list assignment} for a graph $G$ is a function that to each vertex $v\in V(G)$ assigns
a set $L(v)$ of colors, and an \emph{$L$-coloring} is a proper coloring $\phi$ such that $\phi(v)\in L(v)$ for all $v\in V(G)$.
Let us now introduce the definitions needed to more formally state questions that interest us.
A \emph{request} for a graph $G$ with a list assignment $L$ is a function $r$ with $\brm{dom}(r)\subseteq V(G)$
such that $r(v)\in L(v)$ for all $v\in\brm{dom}(r)$.  For $\eps>0$, a request $r$ is \emph{$\eps$-satisfiable}
if there exists an $L$-coloring $\phi$ of $G$ such that $\phi(v)=r(v)$ for at least $\eps|\brm{dom}(r)|$ vertices $v\in\brm{dom}(r)$.
We say that $G$ with the list assignment $L$ is \emph{$\eps$-flexible} if every request is $\eps$-satisfiable.

The \emph{choosability} of $G$ is the minimum integer $k$ such that $G$ has an $L$-coloring for every assignment $L$ of
lists of size at least $k$.  While the choosability is known to behave rather differently from the chromatic number in general~\cite{alondeg},
for sparse graphs the two parameters are often related.  Recall a graph $G$ is \emph{$d$-degenerate} if every subgraph of $G$ has a vertex
of degree at most $d$, or equivalently, if there exists an ordering of its vertices such that each vertex has at most $d$ neighbors that
precede it in the ordering.  A greedy coloring argument shows that a $d$-degenerate graph has choosability at most $d+1$.
As our first result, we observe that allowing one more color gives flexibility.

\begin{theorem}\label{t:degenerate2}
For every integer $d\ge 0$, there exists $\eps>0$ such that every $d$-degenerate graph with an assignment of lists
of size at least $d+2$ is $\eps$-flexible.
\end{theorem}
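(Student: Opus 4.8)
Fix a $d$-degenerate graph $G$ with a list assignment $L$ where $|L(v)| \ge d+2$ for all $v$, and a request $r$. We want to find an $L$-coloring $\phi$ agreeing with $r$ on a constant fraction of $\brm{dom}(r)$. The key observation is that when we color greedily along a degeneracy ordering, each vertex $v$ has at most $d$ already-colored neighbors, so at least $|L(v)| - d \ge 2$ colors remain available — in particular, at least one "spare" color beyond whatever we might want to assign. The idea is to process vertices in a degeneracy ordering and, for each $v \in \brm{dom}(r)$, assign $\phi(v) = r(v)$ whenever $r(v)$ is still available; the extra color buys us room so we never get stuck.

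**The main steps.** First, I would set up the degeneracy ordering $v_1, \dots, v_n$ so that each $v_i$ has at most $d$ neighbors among $v_1, \dots, v_{i-1}$. Process the vertices in this order. When we reach $v_i$: if $v_i \notin \brm{dom}(r)$, pick any available color (one exists since $|L(v_i)| > d \ge$ number of colored back-neighbors). If $v_i \in \brm{dom}(r)$ and $r(v_i)$ is available among the remaining colors, it is tempting to just take it — but this deterministic strategy can fail badly: an adversary can design $L$ and $r$ so that satisfying each request blocks the next one, leaving a linear-sized set where we satisfy essentially none. So instead I would randomize: for each $v_i \in \brm{dom}(r)$ whose requested color $r(v_i)$ is still available, set $\phi(v_i) = r(v_i)$ with some fixed probability $p$, and otherwise pick a uniformly random available color avoiding $r(v_i)$ (possible since at least $2$ colors are available); for $v_i \in \brm{dom}(r)$ with $r(v_i)$ unavailable, and for $v_i \notin \brm{dom}(r)$, pick a uniformly random available color. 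Then bound from below the probability that $r(v_i)$ is still available when we reach $v_i$ — this depends only on the choices at the at-most-$d$ back-neighbors of $v_i$, each of which avoids $r(v_i)$ with probability bounded below by a constant $c = c(d, p) > 0$, so $\Pr[\phi(v_i) = r(v_i)] \ge p \cdot c^d$. Linearity of expectation gives an expected number of satisfied requests at least $(pc^d)|\brm{dom}(r)|$, so some outcome achieves this; take $\eps = pc^d$.

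**The technical heart** is controlling the probability that a color survives. When we color a back-neighbor $u$ of $v = v_i$, we want $\Pr[\phi(u) \ne r(v_i)]$ to be bounded below by an absolute constant. The danger is that $u$'s list of available colors has shrunk to exactly $\{r(v_i), x\}$ for one other color $x$, and $u$'s own request forces it toward $r(v_i)$; but our randomization ensures $u$ takes $r(v_i)$ with probability at most $\max(p, 1/2) < 1$ in that worst case — provided $p < 1$, we always keep a positive gap. More carefully, among $u$'s at-least-two available colors, the probability mass our rule puts on any single color is at most $\max(p, 1 - p) < 1$ when $p \in (0,1)$; choosing, say, $p = 1/2$ makes this bound $1/2$, so each back-neighbor misses $r(v_i)$ with probability $\ge 1/2$, giving $\Pr[\phi(v_i) = r(v_i)] \ge \tfrac12 \cdot 2^{-d}$ and hence $\eps = 2^{-(d+1)}$. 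The one subtlety to check is that these events across different back-neighbors of $v_i$, while not independent, can still be handled: the cleanest route is to condition and note that at the moment we color each back-neighbor $u$, the conditional probability it avoids $r(v_i)$ is at least $1/2$ regardless of history, so the product bound goes through by the chain rule. The main obstacle, then, is not any deep idea but getting this conditional-independence bookkeeping precise so that the per-vertex survival estimate is clean; once that is in hand the theorem follows by linearity of expectation.
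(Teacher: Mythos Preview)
Your argument is correct and takes a more elementary route than the paper's. The paper deduces Theorem~\ref{t:degenerate2} from the stronger weighted statement (Theorem~\ref{t:degenerate2-weight}), which it proves by inductively constructing a single probability distribution on $L$-colorings, independent of any request, in which every pair $(v,c)$ with $c\in L(v)$ occurs with probability at least $\eps$; maintaining this through the induction requires an auxiliary invariant (condition~(ii) of Lemma~\ref{lemma:flex-wdeg}) bounding the probability that a small set of vertices all avoid a given color, and the resulting constant is $\eps=(d+2)^{-(d+1)^2}$. Your greedy randomized coloring along a degeneracy order, with the per-step conditional bound $\Pr[\phi(u)=c\mid\text{history}]\le 1/2$ fed into the chain rule, is both simpler and yields the much better constant $\eps=2^{-(d+1)}$. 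The price is that your distribution depends on the request $r$, so as written it proves only unweighted flexibility; however, if you drop the request bias and simply choose $\phi(v_i)$ uniformly from the available colors, the same $1/2$ per-step bound still holds and produces a request-free distribution with $\Pr[\phi(v)=c]\ge 2^{-d}/(d+2)$ for every $v$ and $c\in L(v)$, recovering the weighted result via Lemma~\ref{lemma:distrib} with a better constant than the paper's.
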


We prove Theorem~\ref{t:degenerate2} in a stronger weighted form.
Let $L$ be a list assignment for a graph $G$.  A \emph{weighted request} is a function $w$ that to each pair $(v,c)$ with $c\in L(v)$
assigns a nonnegative real number.  Let $w(G,L)=\sum_{v\in V(G),c\in L(v)} w(v,c)$.
For $\eps>0$, we say that $w$ is \emph{$\eps$-satisfiable} if there exists an $L$-coloring $\phi$ of $G$ such that
$$\sum_{v\in V(G)} w(v,\phi(v))\ge\eps w(G,L).$$
We say that $G$ with the list assignment $L$ is \emph{weighted $\eps$-flexible} if every weighted request is $\eps$-satisfiable.
Of course, weighted $\eps$-flexibility implies $\eps$-flexibility, and thus Theorem~\ref{t:degenerate2} is implied by the following
result.

\begin{theorem}\label{t:degenerate2-weight}
For every integer $d\ge 0$, there exists $\eps>0$ such that every $d$-degenerate graph with an assignment of lists
of size $d+2$ is weighted $\eps$-flexible.
\end{theorem}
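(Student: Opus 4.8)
The plan is to produce a single randomized $L$-coloring $\phi$ of $G$ with the property that $\mathbb{P}[\phi(v)=c]\ge\delta$ for every vertex $v$ and every $c\in L(v)$, where $\delta=\delta(d)>0$ depends only on $d$; in fact $\delta=\tfrac{1}{2^{d}(d+2)}$ will work. Once this is in hand the theorem follows immediately by linearity of expectation: for an arbitrary weighted request $w$,
$$\mathbb{E}\Big[\sum_{v\in V(G)}w(v,\phi(v))\Big]=\sum_{v,\,c\in L(v)}w(v,c)\,\mathbb{P}[\phi(v)=c]\ \ge\ \delta\!\!\sum_{v,\,c\in L(v)}\!\! w(v,c)\ =\ \delta\,w(G,L),$$
so some $L$-coloring of $G$ has weight at least $\delta\,w(G,L)$, i.e.\ $w$ is $\delta$-satisfiable. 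As $w$ was arbitrary, $G$ with $L$ is weighted $\delta$-flexible, and we take $\eps=\delta$.

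To build $\phi$, fix an ordering $v_1,\dots,v_n$ of $V(G)$ witnessing $d$-degeneracy, and write $N^-(v_i)$ for the (at most $d$) neighbors of $v_i$ among $v_1,\dots,v_{i-1}$. Color $v_1,\dots,v_n$ in this order, using fresh independent randomness at step $i$: having defined $\phi$ on $v_1,\dots,v_{i-1}$, set $A_i:=L(v_i)\setminus\{\phi(u):u\in N^-(v_i)\}$ and let $\phi(v_i)$ be a uniformly random element of $A_i$. Since $|L(v_i)|=d+2$ and $|N^-(v_i)|\le d$ we have $|A_i|\ge 2$, so this is well defined, and $\phi$ is a proper $L$-coloring with probability $1$.

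The crux is the lower bound on the marginals $\mathbb{P}[\phi(v_i)=c]$. Conditioning on $A_i$ gives $\mathbb{P}[\phi(v_i)=c\mid A_i]=\mathbf 1[c\in A_i]/|A_i|\ge\mathbf 1[c\in A_i]/(d+2)$, so it suffices to show $\mathbb{P}[c\in A_i]\ge 2^{-d}$, i.e.\ that with probability at least $2^{-d}$ none of the back-neighbors of $v_i$ receives color $c$. The naive union bound is far too weak here — it only gives $1-d/2$, which is useless once $d\ge 2$ — and the right move is instead to reveal the colors of the back-neighbors one at a time, in the order in which those vertices are processed, and to \emph{multiply} the conditional survival probabilities. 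Concretely, enumerate $N^-(v_i)=\{u_1,\dots,u_k\}$ so that $u_1,\dots,u_k$ occur in this order among $v_1,\dots,v_{i-1}$, let $E_j$ be the event $\phi(u_j)\ne c$, and let $\mathcal F_j$ be the $\sigma$-algebra generated by all randomness used strictly before $u_j$ is colored. Then $A_{u_j}$ is $\mathcal F_j$-measurable with $|A_{u_j}|\ge 2$, and $\phi(u_j)$ is uniform on $A_{u_j}$ given $\mathcal F_j$, so $\mathbb{P}[E_j\mid\mathcal F_j]\ge\tfrac12$; moreover $E_1,\dots,E_{j-1}$ are $\mathcal F_j$-measurable. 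Peeling off $E_k,E_{k-1},\dots$ in turn via the tower property yields $\mathbb{P}[E_1\cap\cdots\cap E_k]\ge 2^{-k}\ge 2^{-d}$, hence $\mathbb{P}[\phi(v_i)=c]\ge 2^{-d}/(d+2)$ as desired.

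The only genuine difficulty is the marginal estimate of the previous paragraph; the rest is bookkeeping. The cleanest way to make the peeling rigorous is to fix independent variables $R_1,\dots,R_n$ up front, let $R_i$ drive the uniform choice from $A_i$, and set $\mathcal F_i:=\sigma(R_1,\dots,R_{i-1})$, so that ``$A_i$ is determined and $\phi(v_i)$ is uniform on $A_i$ given the past'' becomes a literal statement. One should check that the argument does not care whether back-neighbors of $v_i$ are adjacent to each other — it never uses anything beyond $|A_{u_j}|\ge 2$ — and note that, since deleting colors only shrinks lists, the same proof handles lists of size \emph{at least} $d+2$, recovering Theorem~\ref{t:degenerate2}.
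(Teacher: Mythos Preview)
Your argument is correct and, for this particular statement, more elementary than the paper's. Both proofs proceed via Lemma~\ref{lemma:distrib}, producing a random $L$-coloring in which each color $c\in L(v)$ occurs with probability bounded below. You do this in one pass along a $d$-degeneracy ordering, choosing $\phi(v_i)$ uniformly from the available set $A_i$; the only nontrivial step, the bound $\mathbb{P}\bigl[\bigcap_j\{\phi(u_j)\ne c\}\bigr]\ge 2^{-k}$ via the martingale-style peeling, is sound because $\mathbb{P}[\phi(u_j)\ne c\mid\mathcal F_j]\ge 1/2$ pointwise and $E_1,\dots,E_{j-1}$ are $\mathcal F_j$-measurable. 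This yields $\eps=2^{-d}/(d+2)$.

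The paper instead proves a stronger inductive statement (Lemma~\ref{lemma:flex-wdeg}): it carries along, besides the marginal lower bound~(i), an auxiliary avoidance bound~(ii) asserting that any $|S|\le d$ vertices simultaneously avoid a given color with probability at least $\delta^{|S|}$. The induction peels off either a single low-degree vertex or a connected block of $d+1$ vertices of degree $d+1$, using Lemma~\ref{lemma:degcol} to control the extension. The point of this extra machinery is generality: it works for \emph{weakly} $d$-degenerate graphs, which is what is needed to derive Theorem~\ref{t:mad2}. Your approach relies on having a genuine $d$-degeneracy ordering (a single low-degree vertex at each step) and would not handle the weakly $d$-degenerate case without further work. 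On the other hand, for Theorem~\ref{t:degenerate2-weight} as stated your proof is shorter and gives a much better constant than the paper's $\eps=(d+2)^{-(d+1)^2}$.
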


To prove weighted $\eps$-flexibility, we use the following observation.
\begin{lemma}\label{lemma:distrib}
Suppose there exists a probability distribution on $L$-colorings $\phi$ of $G$
such that for every $v\in V(G)$ and $c\in L(v)$, $\brm{Prob}[\phi(v)=c]\ge\eps$.
Then $G$ with $L$ is weighted $\eps$-flexible.
\end{lemma}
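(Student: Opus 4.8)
The plan is a routine first-moment (averaging) argument, exploiting that a weighted request is a linear functional of the coloring. Fix an arbitrary weighted request $w$, and let $\phi$ be an $L$-coloring of $G$ drawn at random from the distribution provided by the hypothesis; note that, since $G$ is finite and all lists are finite, there are only finitely many $L$-colorings, so this distribution is supported on a finite set. First I would compute the expected total weight collected by $\phi$. By linearity of expectation, the expected value of $\sum_{v\in V(G)} w(v,\phi(v))$ equals
$$\sum_{v\in V(G)}\ \sum_{c\in L(v)} w(v,c)\,\brm{Prob}[\phi(v)=c].$$
Next I would invoke the assumption $\brm{Prob}[\phi(v)=c]\ge\eps$ for every $v\in V(G)$ and every $c\in L(v)$. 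Since $w(v,c)\ge 0$ by the definition of a weighted request, each summand is at least $\eps\,w(v,c)$, and hence the displayed quantity is at least
$$\eps\sum_{v\in V(G)}\ \sum_{c\in L(v)} w(v,c) \;=\; \eps\,w(G,L).$$

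It then remains only to pass from an inequality on the average to one on a single coloring: since the expectation of $\sum_{v\in V(G)} w(v,\phi(v))$ is at least $\eps\,w(G,L)$, at least one $L$-coloring $\phi$ in the support of the distribution must satisfy $\sum_{v\in V(G)} w(v,\phi(v))\ge\eps\,w(G,L)$, and this $\phi$ witnesses that $w$ is $\eps$-satisfiable. As $w$ was an arbitrary weighted request, $G$ with $L$ is weighted $\eps$-flexible, as claimed.

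I do not expect any genuine obstacle in this lemma: the two steps that might look like they need justification — interchanging the two finite sums (equivalently, applying linearity of expectation coordinatewise over $V(G)$) and deducing a pointwise bound from an averaged one — are both immediate. The only hypothesis from the definition of a weighted request that actually gets used is the nonnegativity of $w$, which is exactly what licenses the term-by-term comparison $w(v,c)\,\brm{Prob}[\phi(v)=c]\ge\eps\,w(v,c)$; linearity of expectation alone would in fact already give the first display even without it.
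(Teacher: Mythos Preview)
Your argument is correct and coincides with the paper's proof: both pick a random $L$-coloring from the given distribution, use linearity of expectation together with the pointwise lower bound $\brm{Prob}[\phi(v)=c]\ge\eps$ and nonnegativity of $w$ to get $\brm{E}\bigl[\sum_v w(v,\phi(v))\bigr]\ge\eps\,w(G,L)$, and then conclude that some $\phi$ achieves this bound. Your additional remarks on finiteness and on where nonnegativity is used are accurate but not essential.
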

\begin{proof}
Let $w$ be a weighted request for $G$ and $L$.  Let $\phi$ be chosen at random from the
postulated probability distribution.  By the linearity of expectation,
we have
$$\brm{E}\Biggl[\sum_{v\in V(G)} w(v,\phi(v))\Biggr]=\sum_{v\in V(G),c\in L(v)} \brm{Prob}[\phi(v)=c]\cdot w(v,c)\ge\eps w(G,L),$$
and thus there exists an $L$-coloring $\phi$ with $\sum_{v\in V(G)} w(v,\phi(v))\ge \eps w(G,L)$
as required.
\end{proof}
Let us remark that via linear programming duality, it is also easy to see that weighted $\eps$-flexibility implies
existence of such a distribution.

We do not know whether Theorem~\ref{t:degenerate2} can be strengthened to allow lists of size $d+1$.
The following weaker claim applies e.g. to planar graphs, showing that they are $\eps$-flexible
with assignments of lists of size $6$. Recall that the \emph{maximum average degree} of a graph $G$ is equal to the maximum of $2|E(H)|/|V(H)$ taken over  non-null subgraphs $H$ of $G$. 

\begin{theorem}\label{t:mad}
For every integer $d\ge 2$, there exists $\eps>0$ as follows.
If $G$ is a graph of maximum average degree at most $d$
and choosability at most $d-1$, then $G$ with an assignment of lists of size at least $d$ is $\eps$-flexible.
\end{theorem}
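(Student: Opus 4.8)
The plan is to fix an arbitrary request $r$, write $D=\brm{dom}(r)$, and construct (probabilistically) an $L$-coloring $\phi$ of $G$ with $|\{v\in D:\phi(v)=r(v)\}|\ge\eps|D|$, choosing $\eps$ small in terms of $d$ only at the very end. First one reduces to $|D|$ large: if $|D|\le C(d)$ for a suitable constant, pick any $v_0\in D$, set $\phi(v_0)=r(v_0)$, and extend $\phi$ to all of $G$ — each neighbour of $v_0$ loses at most one colour, so $G-v_0$ carries a list assignment of size $\ge d-1\ge\brm{choosability}(G-v_0)$ and is colourable; this already satisfies $1\ge|D|/C(d)$ requests. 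The useful features of the hypotheses that I intend to exploit repeatedly are: (i) every subgraph of $G$ has choosability $\le d-1$ and maximum average degree $\le d$; (ii) $\brm{mad}(G)\le d$ forces $G$ to be $d$-degenerate, so $G[D_c]$ has an independent set of size $\ge|D_c|/(d+1)$ where $D_c:=\{v\in D:r(v)=c\}$; and (iii) $\brm{mad}(G)\le d$ bounds, for every $k$, the number of vertices with at least $k$ neighbours inside any fixed set $S$ by roughly $\tfrac{d}{2k-d}|S|$.

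The core step is a precolour-and-extend argument: choose for each colour $c$ an independent set $I_c\subseteq D_c$, colour $I_c$ with $c$, and extend. The union $\bigcup_c I_c$ is then a proper partial colouring (distinct $I_c$'s get distinct colours), and the extension to $G-\bigcup_c I_c$ exists provided each vertex outside the union loses at most one colour, i.e.\ provided every such vertex has neighbours in at most one of the $I_c$'s and is non-adjacent within its own class; then the reduced lists have size $\ge d-1$ and (i) finishes. If one insists on this cleanly, the $I_c$'s must be a distance-$3$ packing, which is too strong in general; so I would instead allow a bounded number of ``overloaded'' vertices (having two neighbours, in distinct classes, among the precoloured set, or ruining their own requested colour on a neighbour), delete them as well, colour the rest, and colour the overloaded vertices last — here one must argue, via (iii), that the overloaded vertices are a tiny fraction of $|D|$ and that they themselves still receive a colour (e.g.\ that they form a set of small degree into the already-coloured part, again using $\brm{mad}(G)\le d$). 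Combining this with a random choice of the $I_c$'s inside the $D_c$'s lets one lower-bound the expected number of satisfied requests by a constant fraction of $|D|$.

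The main obstacle, and where I expect the real work to be, is that $D$ may be \emph{locally dense}: it need not contain any large packing (think of $D$ a large side of a complete bipartite-like graph with $\brm{mad}$ close to $d$), and simultaneously it may be attached to vertices of unbounded degree, which can destroy many requested colours at once. I would handle this by a dichotomy driven by $\brm{mad}(G)\le d$: either $D$ contains a distance-$3$ independent subset of size $\ge\delta|D|$, in which case the precolour-and-extend argument above applies directly; or $D$ is $2$-dominated by a small set $W$, in which case one first colours a bounded-size neighbourhood of $W$ (randomly, so that each vertex of $D$ keeps its requested colour available on all its $W$-near neighbours with constant probability) and only then runs the packing argument on what remains. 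The bookkeeping that makes the two regimes patch together — in particular showing that the ``large number of big-degree vertices'' case is exactly the case where $D$ is dominated by few vertices, and that all the error sets stay a small fraction of $|D|$ — is the crux; everything else is a linearity-of-expectation computation of the type in the proof of Lemma~\ref{lemma:distrib}, with $\eps$ falling out as a function of $d$.
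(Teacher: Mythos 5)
Your proposal has a genuine gap: the central precolour-and-extend step is never carried out, and the specific tools you propose for it do not suffice. To extend a precolouring of the sets $I_c$ you need every uncoloured vertex to lose at most one colour from its list, which (as you note) essentially forces the $I_c$'s to be a distance-$3$ packing. You propose to control the exceptional ``overloaded'' vertices via your estimate (iii), but that estimate --- roughly $\tfrac{d}{2k-d}|S|$ vertices with at least $k$ neighbours in $S$ --- is vacuous for $k=2$ as soon as $d\ge 4$ (the denominator is nonpositive), and $k=2$ is exactly the case you need. Moreover, an overloaded vertex may have arbitrarily large degree into the already-coloured part, so ``colour them last'' is unjustified; and the dichotomy ``either $D$ contains a large distance-$3$ packing or $D$ is $2$-dominated by a small set'' is asserted rather than proved, with the second branch left as a vague plan. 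You yourself identify this bookkeeping as ``the crux,'' and it is precisely the part that is missing.

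The paper's proof sidesteps all of this by never precolouring the requested vertices. Since $G$ is $(d-1)$-choosable it is $(d-1)$-colourable, so some colour class $A$ has $|A\cap\brm{dom}(r)|\ge|\brm{dom}(r)|/(d-1)$; restricting $r$ to the independent set $A$ costs only a factor $d-1$. For independent $\brm{dom}(r)$ one inducts: a non-requested vertex of degree less than $d$ is deleted and coloured greedily at the end, so one may assume every non-requested vertex has degree at least $d$, and then $\brm{mad}(G)\le d$ forces at least half of the requested vertices to have degree at most $2d$. Now delete one uniformly random colour from the list of each \emph{non-requested} vertex, colour $G$ from the shrunken lists of size $d-1$ using the choosability hypothesis, and afterwards recolour each requested vertex $v$ with $r(v)$ whenever $r(v)$ does not appear on its (entirely non-requested) neighbourhood; for $v$ of degree at most $2d$ this happens with probability at least $d^{-2d}$, and linearity of expectation yields $\eps=\frac{1}{2(d-1)d^{2d}}$. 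The idea you are missing is to randomly perturb the lists of the \emph{other} vertices and recolour the requested ones at the end, rather than fixing the requested colours first and trying to extend.
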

Note that we can only prove the unweighted version of Theorem~\ref{t:mad}.  A weighted result can be proved
under a slightly stronger assumption on the maximum average degree.
\begin{theorem}\label{t:mad2}
For every integer $d\ge 0$, there exists $\eps>0$ as follows.
If $G$ is a graph of maximum average degree less than $d+1+2/(d+4)$
then $G$ with an assignment of lists of size $d+2$ is weighted $\eps$-flexible.
\end{theorem}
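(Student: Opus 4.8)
The plan is to apply Lemma~\ref{lemma:distrib}: it suffices to exhibit, for some $\eps=\eps(d)>0$, a probability distribution on $L$-colorings $\phi$ of $G$ with $\brm{Prob}[\phi(v)=c]\ge\eps$ for every $v\in V(G)$ and $c\in L(v)$. I would build this distribution out of the one guaranteed by Theorem~\ref{t:degenerate2-weight} for a suitable $d$-degenerate subgraph of $G$, and then ``repair'' it. Throughout, note that $\brm{mad}(G)<d+1+2/(d+4)<d+2$, so every subgraph of $G$ has a vertex of degree at most $d+1$; that is, $G$ is $(d+1)$-degenerate.

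The first step is structural and is where the constant $2/(d+4)$ is spent. I would prove, by a discharging argument, that $\brm{mad}(G)<d+1+2/(d+4)$ implies $G$ has an edge set $X\subseteq E(G)$ such that $G-X$ is $d$-degenerate and $X$, viewed as a graph, is sparse — ideally a matching, or at least of bounded maximum degree, or a disjoint union of bounded-size pieces. Assign each vertex charge equal to its degree; vertices of degree at most $d$ are already ``safe'' (they never need an incident deleted edge), so the excess charge over $d+1$ lives on vertices of degree $\ge d+1$, and the $2/(d+4)$ slack is exactly what is needed to route this excess so that each vertex of degree $\ge d+1$ can afford to contribute at most one incident edge to $X$ without the deleted edges ever accumulating too densely. (A vertex of degree $d+1$ needs only to shed one edge to become ``safe''; the few vertices of much larger degree must have enough low-degree neighbors nearby to pay for themselves.)

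Given such an $X$, apply Theorem~\ref{t:degenerate2-weight} to $G-X$, which is $d$-degenerate and inherits the lists of size $d+2$: it is weighted $\eps_0$-flexible for some $\eps_0=\eps_0(d)>0$, hence — by the linear-programming duality remark following Lemma~\ref{lemma:distrib} — there is a distribution $\mc{D}_0$ on $L$-colorings of $G-X$ with $\brm{Prob}[\phi_0(v)=c]\ge\eps_0$ for all $v$ and $c\in L(v)$. Now sample $\phi_0\sim\mc{D}_0$ and repair the conflicts along $X$: for each edge $uv\in X$ that is monochromatic under $\phi_0$, recolor one endpoint to a color missing on its $G$-neighborhood, which exists whenever that endpoint has degree at most $d+1$ (its list has size $d+2$), with the choice of which endpoint and which color made at random. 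Because $X$ is sparse, the repairs at distinct edges interact only locally, so processing them in a degeneracy order of $X$ lets one carry them all out, yielding a proper $L$-coloring $\phi$ of $G$; and the marginals survive: a vertex not incident to $X$ keeps its $\mc{D}_0$-marginal, while a vertex incident to $X$ retains each color with probability bounded below in terms of $\eps_0$, $d$, and its number of incident $X$-edges, because it always has a free slot and the recoloring decisions are randomized.

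The main obstacle is coordinating these two steps. The crude lower bound on the surviving marginals degrades multiplicatively in the number of $X$-edges at a vertex, so the discharging must genuinely deliver an $X$ of bounded degree (not merely of small total size), and the repairs must be decoupled enough — e.g. via the degeneracy order of $X$ together with auxiliary independent randomness — that all of them can be performed simultaneously while keeping every marginal above a common $\eps=\eps(d)>0$. The secondary difficulty is the handful of vertices of degree $\ge d+2$, which have no guaranteed free color available for repair; these must be kept out of $X$ (or otherwise insulated), and this is again where the slack in $\brm{mad}(G)$ is used. Once the distribution is in hand, weighted $\eps$-flexibility is immediate from Lemma~\ref{lemma:distrib}.
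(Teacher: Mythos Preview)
Your route is not the paper's, and as sketched it has two genuine gaps.

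The paper never deletes edges and repairs. Instead it defines a graph to be \emph{weakly $d$-degenerate} if every subgraph has either a vertex of degree at most $d$ or a connected set of $d{+}1$ vertices each of degree $d{+}1$. Lemma~\ref{lemma:flex-wdeg} then builds the desired distribution for any weakly $d$-degenerate graph by induction: peel off such a configuration $P$, recurse on $G-P$, and extend into $P$ by choosing a uniformly random $L'$-coloring of $G[P]$ (where $L'$ is $L$ minus the colors of already-colored neighbors). The induction carries an extra invariant beyond the marginals --- for every $S$ with $|S|\le d$ and every color $c$, $\brm{Prob}[(\forall v\in S)\,\phi(v)\neq c]\ge\delta^{|S|}$ --- and this is exactly what guarantees that a requested color survives into $L'$ with positive probability when $P$ is reattached. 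A short discharging argument (Lemma~\ref{lemma:sg}) shows that $\brm{mad}(G)<d+1+2/(d+4)$ forces, in every subgraph, a vertex of degree $d{+}1$ with at most one high-degree neighbor (hence weak $d$-degeneracy), and Lemma~\ref{lemma:distrib} finishes.

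In your plan both steps are only sketched at the point where they become delicate. First, you never prove the existence of $X$: that discharging yields an edge set with $G-X$ $d$-degenerate, $X$ of bounded degree, and high-degree vertices insulated from $X$ is plausible but is a nontrivial global statement, and the $2/(d+4)$ threshold is tight enough that the details matter (the paper's discharging yields a local reducible configuration in every subgraph, not a global edge set). Second, and more seriously, the repair does not obviously preserve marginals. The distribution $\mc{D}_0$ you obtain via LP duality satisfies only $\brm{Prob}[\phi_0(v)=c]\ge\eps_0$; it gives no control over correlations across an $X$-edge, nor over the probability that a given color is free at a vertex. If an endpoint $v$ of an $X$-edge has degree at least $d{+}2$, it may have no free color and cannot be recolored --- but deterministically recoloring the other endpoint $u$ can drive $\brm{Prob}[\phi(u)=c]$ to zero for some $c$, since nothing forbids $\mc{D}_0$ from satisfying $\phi_0(u)=c\Rightarrow\phi_0(v)=c$ almost surely. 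Your coin-flip idea salvages the bound when $X$ is a matching between degree-$(d{+}1)$ vertices, but you have not established that $X$ can be taken of this form, and once a vertex lies in several $X$-edges the repairs interfere. The paper's auxiliary invariant (ii) in Lemma~\ref{lemma:flex-wdeg} is precisely the correlation control you are missing, and it is baked into the inductive construction rather than recovered after the fact.
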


A necessary condition for flexibility is that requests with singleton domain can be satisfied.
Coming back to the case of $d$-degenerate graphs with lists of size $d+1$, even proving this
necessary condition is non-trivial and we can only do it in the special case that $d+1$ is a prime.

\begin{theorem}\label{t:degenerate1}
Let $d\ge 2$ be an integer such that $d+1$ is a prime.
If $r$ is a request for a $d$-degenerate graph with an assignment of lists of size at least $d+1$
and $|\brm{dom}(r)|=1$, then $r$ is $1$-satisfiable.
\end{theorem}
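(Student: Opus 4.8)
The plan is to strip $G$ down to a single hard configuration by repeatedly deleting low-degree vertices, and then to settle that configuration with a Combinatorial Nullstellensatz argument over a field of characteristic $d+1$; the primality is used only at the very end.

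\medskip\noindent\textbf{Reductions.} We may assume every list has size exactly $d+1$, since shrinking lists (while keeping $c_0\in L(v_0)$) only makes the statement harder. If some vertex $z\ne v_0$ has $\deg_G(z)\le d$, delete it, apply induction on $|V(G)|$ to get an $L$-colouring of $G-z$ matching the request, and colour $z$ greedily from its list of size $d+1>\deg_G(z)$. If $\deg_G(v_0)\le d$, it suffices to $L'$-colour $G-v_0$, where $L'(u)=L(u)\setminus\{c_0\}$ for $u\in N_G(v_0)$ and $L'(u)=L(u)$ otherwise, and then set $\phi(v_0)=c_0$; such an $L'$-colouring is built greedily by removing any vertex $z$ of $G-v_0$ with $\deg_{G-v_0}(z)<|L'(z)|$ and recursing. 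Since $G$ is $d$-degenerate it has a vertex of degree at most $d$, so after these reductions we are in the \emph{core case}: $\deg_G(v_0)\le d$ while $\deg_G(v)\ge d+1$ for every $v\ne v_0$ (equivalently, $H:=G-v_0$ has minimum degree $d$, attained only at neighbours of $v_0$).

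\medskip\noindent\textbf{The core case via the polynomial method.} Embed all colours into a field $F$ of characteristic $p:=d+1$ (a finite field $\bb{F}_{p^k}$ with $p^k$ at least the number of colours), fix any orientation $D$ of $G$, and set $P_G=\prod_{(u,v)\in D}(x_u-x_v)$ and $g=P_G\cdot\prod_{c\in L(v_0),\,c\ne c_0}(x_{v_0}-c)$. For $\mathbf x$ in the box $\prod_vL(v)$ we have $g(\mathbf x)\ne 0$ exactly when $\mathbf x$ is a proper $L$-colouring with $x_{v_0}=c_0$, so by the Combinatorial Nullstellensatz it is enough to exhibit a monomial $\prod_vx_v^{t_v}$ in $g$ with every $t_v\le d$ and $\sum_vt_v=\deg g=|E(G)|+d$ whose coefficient is nonzero in $F$. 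Since $P_G$ is homogeneous of degree $|E(G)|$ and the extra factor is monic of degree $d$ in $x_{v_0}$, choosing $t_{v_0}=d$ (so $\sum_{v\ne v_0}t_v=|E(G)|$) forces this monomial to arise from the leading term of the extra factor times the $x_{v_0}$-free part of $P_G$, and by the Alon--Tarsi expansion its coefficient equals, up to sign, $\mathrm{EE}(D')-\mathrm{EO}(D')$, where $D'$ is any orientation of $G$ in which $v_0$ is a sink and $d^+_{D'}(v)=t_v\le d$ for all $v$, and $\mathrm{EE},\mathrm{EO}$ count its even, resp.\ odd, Eulerian sub-digraphs. Thus it remains to produce an orientation $D'$ of $G$ with $v_0$ a sink, all out-degrees at most $d$, and $\mathrm{EE}(D')\not\equiv\mathrm{EO}(D')\pmod p$; equivalently (deleting $v_0$), an orientation of $H$ with out-degree at most $d-1$ at each neighbour of $v_0$ and at most $d$ elsewhere, whose signed Eulerian count is a nonzero element of $\bb{Z}/(d+1)$.

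\medskip\noindent\textbf{The main obstacle.} If $H$ admitted an \emph{acyclic} orientation obeying these out-degree bounds, its signed Eulerian count would be $\pm1$ and we would be done with no hypothesis on $d+1$; but in the core case such an acyclic orientation need not exist — already for $d=2$, take the graph obtained from two triangles sharing an edge by joining a new vertex $v_0$ to the two degree-$2$ vertices — so one is forced to use a cyclic orientation. This is the crux, and the unique place the primality of $d+1$ enters: one must exhibit a (necessarily cyclic) orientation whose signed count of Eulerian sub-digraphs is a \emph{unit} of $\bb{Z}/(d+1)$. I would attack this by starting from the acyclic orientation given by a degeneracy ordering of $H$ (under which every out-degree is already at most $d$) and repairing the out-degree at each of the at most $\deg_G(v_0)\le d$ neighbours of $v_0$ that exceed $d-1$ by reversing a single incident edge, then controlling how $\mathrm{EE}-\mathrm{EO}$ changes under these reversals; it is here that invertibility in $\bb{Z}/(d+1)$ becomes both available and essential. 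Making this control uniform over all core configurations is the hard part, and also explains why the argument does not obviously survive $d+1$ composite.
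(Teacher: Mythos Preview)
Your proposal correctly sets up the problem via the Combinatorial Nullstellensatz and the Alon--Tarsi reformulation in terms of Eulerian sub-digraphs, but it stops short of a proof: the ``main obstacle'' section only describes a plan (start from a degeneracy orientation, reverse one edge at each of the $\le d$ bad neighbours of $v_0$, and track the change in $\mathrm{EE}-\mathrm{EO}$ modulo $d+1$) without carrying it out, and you yourself flag the execution as ``the hard part''. So as written this is a sketch, not a proof, and the missing step is not a routine computation --- controlling how a single edge reversal changes the signed Eulerian count of a general digraph is genuinely delicate, and there is no obvious reason the product of the resulting correction factors should be a unit.

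The paper resolves exactly this obstacle by two moves that are worth contrasting with your approach. First, instead of hunting for \emph{one} monomial (equivalently, one orientation) whose coefficient is a unit in $\mathbb{Z}/(d+1)$, it works with the \emph{signed sum} $c_G(h)=\sum_{\sigma\in S_d}\sgn(\sigma)\,c_G(h,\sigma)$ of $d!$ coefficients of the graph polynomial and shows this sum is $\equiv -1\pmod{d+1}$; any individual coefficient may be hard to pin down, but their alternating sum is not. Second, it \emph{generalises} the statement so that the total deficiency $d$ may be spread arbitrarily over the vertices (list sizes $|L(v_i)|\ge d+1-r(i)$ with $\sum_i r(i)=d$), which is precisely what is needed to make an induction on $|V(G)|$ go through by peeling off the last vertex in a degeneracy ordering: deleting $v_n$ redistributes its deficiency $r(n)$ among its $d$ back-neighbours, and the identity $\binom{d}{k}\equiv(-1)^k\pmod{d+1}$ keeps the invariant $c_G(h)\equiv -1$ intact. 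The base case is the clique $K_d$, handled by a direct permutation count plus Wilson's theorem. Your reduction to a rigid ``core case'' (all vertices except $v_0$ of degree $\ge d+1$) is exactly what blocks such an induction, since removing any vertex other than $v_0$ then destroys the degree hypotheses.

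A small write-up issue: your second reduction (``such an $L'$-colouring is built greedily'') is not actually a reduction --- in the core case the greedy stalls immediately, since every vertex of $H=G-v_0$ has degree at least its $L'$-list size --- so that sentence should be dropped and you should pass straight from the first reduction to the core case.
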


It is natural to ask whether $\eps$-flexibility implies weighted $\eps'$-flexibility for some $\eps'\le\eps$
(possibly also depending on the number of colors). This is false, as we show in Section~\ref{s:weighted}.
Theorems~\ref{t:degenerate2-weight} and \ref{t:mad} are proved in Section~\ref{s:degen}.
We prove Theorem~\ref{t:degenerate1} in Section~\ref{s:null}, using Combinatorial Nullstellensatz.

\section{Open problems}

In this paper, we explore only some very basic properties of flexibility, leaving many open questions.
Let us first explicitly state the problem we discussed in the introduction.

\begin{problem}
Does there for every integer $d\ge 0$ exist $\eps>0$ such that every $d$-degenerate graph with an assignment
of lists of size $d+1$ is weighted $\eps$-flexible? Or at least $\eps$-flexible?
\end{problem}
For $d=0$ the problem is trivial. For $d=1$ we ask about forests with assignments of lists of size $2$,
and these are easily seen to be weighted $1/2$-flexible.  Hence, the first open case is $d=2$.

A possibly easier special case of this problem can be formulated in a Brooks'-like setting.
\begin{problem}\label{prob:brooks}
Does there for every integer $\Delta\ge 2$ exist $\eps>0$
such that every connected graph of maximum degree $\Delta$ that is not $\Delta$-regular, with
an assignment of lists of size $\Delta$, is weighted $\eps$-flexible? Or at least $\eps$-flexible?
\end{problem}
Note that it is not sufficient to forbid cliques and odd cycles; for example, there exists an assignment
of lists of size two to vertices of an even cycle such that some request with a singleton domain is not satisfiable.
Conceivably, Problem~\ref{prob:brooks} could have positive answer for $\Delta$-regular graphs
with assignments of lists of size $\Delta$ such that all requests with a singleton domain are satisfiable.

Planar graphs are known to be $5$-choosable~\cite{thomassen1994}, and planar graphs of girth at least $5$
are $3$-choosable~\cite{thomassen1995-34}.  Theorem~\ref{t:mad} (and in the latter case, Theorem~\ref{t:mad2} as well)
show their flexibility with assignments
of lists of size $6$ and $4$, respectively.  We believe these bounds can be improved.

\begin{problem}
Does there exist $\eps>0$ such that every planar graph with an assignment of lists of size $5$ is
weighted $\eps$-flexible? Or at least $\eps$-flexible?
\end{problem}

\begin{problem}
Does there exist $\eps>0$ such that every planar graph of girth at least $5$ with an assignment of lists of size $3$ is
weighted $\eps$-flexible? Or at least $\eps$-flexible?
\end{problem}

However, even simpler questions involving requests seem nontrivial. For example, Euler's formula trivially implies that planar triangle-free 
graphs are $4$-choosable and that planar graphs of girth $6$ are $3$-choosable. Hence we have the following questions.

\begin{problem}
Does there exist $\eps>0$ such that every planar triangle-free graph with an assignment of lists of size $4$ is
weighted $\eps$-flexible? Or at least $\eps$-flexible?
\end{problem}

\begin{problem}
Does there exist $\eps>0$ such that every planar graph of girth at least $6$ with an assignment of lists of size $3$ is
weighted $\eps$-flexible? Or at least $\eps$-flexible?
\end{problem}

Let us remark that Theorem~\ref{t:mad2} implies weighted flexibility of planar graphs of girth at least $12$ with lists of size $3$.

Finally, since the existence of list-colorings in the problems given above is
also known to hold for locally planar graphs (that is, graphs embedded on a
surface with large edge-width), it is natural to ask these questions for
locally planar graphs.

\section{Weighted and unweighted requests}\label{s:weighted}

In this section, we explore the relationship between the weighted and unweighted variants of flexibility.
Let us first mention one triviality: a graph with assignments of lists of size $k$ can only be weighted $\eps$-flexible
for $\eps\le 1/k$, as the request can put equal weights on colors in the list of one vertex.  So, for the weighted
version increasing the size of the lists can actually make things worse, which is not the case for the unweighted version.
However, as we are generally interested in the case where the size of the lists is a fixed constant, this does not bother us.

It is easy to see that if an $n$-vertex graph with a given list assignment is $\eps$-flexible,
it is also weighted $\Omega(1/\log n)$-flexible.

\begin{lemma}\label{l:unwtow}
Let $d\ge 1$ be an integer and $\eps>0$ a real number.
Let $G$ be a graph with $n$ vertices and $L$ an assignment of lists of size $d$ that are $\eps$-flexible.
Then $G$ and $L$ are weighted $\frac{1}{d\log_{1/(1-\eps)} n}$-flexible.
\end{lemma}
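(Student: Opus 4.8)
The plan is to convert a weighted request into a family of unweighted requests by a dyadic bucketing of the weights, then average. First I would normalize so that $w(G,L)=1$, and discard all pairs $(v,c)$ with $w(v,c) < 1/n^2$ or so (more precisely, below some threshold chosen so their total contribution is at most, say, $1/2$); these contribute negligibly to $w(G,L)$, so it suffices to satisfy a constant fraction of the remaining weight. Now every surviving pair has weight in the range $[1/n^2, 1]$, so the exponents $\lfloor \log_2 w(v,c)\rfloor$ take at most $O(\log n)$ distinct values. Group the surviving pairs into buckets $B_1,\dots,B_m$ with $m=O(\log n)$, where $B_i$ collects pairs whose weight lies in $[2^{-i},2^{-i+1})$. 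Since the total surviving weight is $\Omega(1)$, by averaging there is a bucket $B_j$ whose total weight is $\Omega(1/\log n)$; within that bucket all weights agree up to a factor of $2$.

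Next I would build an unweighted request from $B_j$. For each vertex $v$, if $v$ appears in some pair $(v,c)\in B_j$, pick one such $c$ (say the one maximizing $w(v,c)$, ties broken arbitrarily) and set $r(v)=c$; vertices appearing in no pair of $B_j$ are left out of $\brm{dom}(r)$. Because each vertex contributes at most $d$ pairs to $B_j$ and the weights inside $B_j$ differ by at most a factor of $2$, the single chosen pair at $v$ carries at least a $\frac{1}{2d}$ fraction of $v$'s total weight in $B_j$; summing over $v$, the weight of the pairs selected by $r$ is at least $\frac{1}{2d}\cdot \brm{weight}(B_j) = \Omega(1/(d\log n))$. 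Now apply $\eps$-flexibility to $r$: there is an $L$-coloring $\phi$ agreeing with $r$ on at least $\eps|\brm{dom}(r)|$ vertices of $\brm{dom}(r)$. The difficulty here is that $\eps$-satisfiability only guarantees a fraction of the \emph{number} of requested vertices, not of their total weight — a worst case could satisfy exactly the cheapest $\eps$-fraction. I would handle this exactly as in the statement of the bound, which has a $\log_{1/(1-\eps)} n$ rather than a $\log 2$ in the denominator: iterate. Satisfying an $\eps$-fraction of $\brm{dom}(r)$ leaves a residual request on $(1-\eps)|\brm{dom}(r)|$ vertices; feeding the residual back into $\eps$-flexibility and repeating $t$ times, after $t=\log_{1/(1-\eps)}|\brm{dom}(r)| \le \log_{1/(1-\eps)} n$ rounds every vertex of $\brm{dom}(r)$ has been satisfied in at least one of the colorings $\phi_1,\dots,\phi_t$. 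Hence one of these $t$ colorings satisfies requested pairs of total weight at least $\frac1t$ times the full selected weight, i.e.\ $\Omega\!\left(\frac{1}{d\log_{1/(1-\eps)} n}\right)$, which after tracking the constants gives the stated bound $\frac{1}{d\log_{1/(1-\eps)} n}\, w(G,L)$.

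I expect the main obstacle to be the bookkeeping in this iteration argument: one must check that the $t$ colorings collectively cover every vertex of $\brm{dom}(r)$ (so that the best single coloring captures a $1/t$ share of the selected weight), and one must be careful that "residual request" really is a legitimate request — which it is, since restricting $r$ to an arbitrary subset of its domain yields a request, and flexibility applies to all requests. The dyadic bucketing and the per-vertex selection losing a factor $2d$ are routine; the subtlety is purely in turning a count guarantee into a weight guarantee, which costs the logarithmic factor and is exactly where the $\log_{1/(1-\eps)} n$ enters.
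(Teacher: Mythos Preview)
Your iteration argument---repeatedly apply $\eps$-flexibility, remove the satisfied vertices, and observe that after at most $t\le\log_{1/(1-\eps)}n$ rounds the satisfied sets partition the domain, so one round captures at least a $1/t$ share of the total requested weight---is correct and is essentially what the paper does. (The paper sorts the vertices by requested weight and iterates on prefixes rather than removing satisfied vertices, but the effect is the same.) The problem is that you bolt this iteration onto an unnecessary dyadic-bucketing preprocessing. As written, you lose a factor of roughly $\log_2 n$ in passing to a single bucket, another factor of $2d$ in passing to one color per vertex, and \emph{then} the factor $t$ from the iteration; this yields a bound of order $\frac{1}{d\,(\log_2 n)\,(\log_{1/(1-\eps)} n)}$ rather than the stated $\frac{1}{d\log_{1/(1-\eps)} n}$, so the constants do not track. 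Moreover, once you are inside a single bucket the weights agree to within a factor of $2$, so one application of $\eps$-flexibility already converts an $\eps$-fraction of the vertices into an $(\eps/2)$-fraction of the weight; the iteration inside the bucket is redundant.

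The fix is to drop the bucketing and thresholding entirely. For each vertex $v$ pick the color $c_v$ maximizing $w(v,c_v)$, so that $\sum_v w(v,c_v)\ge w(G,L)/d$; form the single unweighted request $r(v)=c_v$ on all of $V(G)$; and run your iteration directly on $r$. The satisfied sets over the $t$ rounds partition $V(G)$, so some one of the $t$ colorings achieves weight at least $\frac{1}{t}\sum_v w(v,c_v)\ge \frac{w(G,L)}{d\log_{1/(1-\eps)} n}$, which is exactly the claimed bound and exactly the paper's proof.
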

\begin{proof}
Let $w$ be a weighted request for $G$ and $L$.
For each $v\in V(G)$, let $c_v\in L(v)$ be a color such that $w(v,c_v)\ge w(v,c)$
for all $c\in L(v)$.  Let $v_1$, \ldots, $v_n$ be an ordering of vertices of $G$ such that
$w(v_1,c_{v_1})\ge w(v_2,c_{v_2})\ge\ldots\ge w(v_n,c_{v_n})$.  
Let $W=\sum_{i=1}^n w(v_i,c_{v_i})$, and note that $W\ge w(G,L)/d$.  

For $1\le k\le n$, let $r_k$ be the request with $\brm{dom}(r_k)=\{v_1,\ldots,v_k\}$ and $r_k(v)=c_v$ for $v\in \brm{dom}(r_k)$.
Since $G$ and $L$ are $\eps$-flexible, there exists an $L$-coloring $\phi_k$ that $\eps$-satisfies $r_k$; i.e., denoting
$M_k=|\{i:1\le i\le k,\phi_k(v_i)=c_{v_i}\}|$, we have $|M_k|\ge \eps k$.

Let $n_0=n$. For $i\ge 1$, let us inductively define $n_i=n_{i-1}-|M_{n_{i-1}}|$, and let $t$ be the smallest index such that $n_t=0$.
Since $|M_{n_{i-1}}|\ge \eps n_{i-1}$, we have $n_i\le (1-\eps)n_{i-1}$, and consequently
$n_i\le (1-\eps)^i n$.  We conclude that $t\le \log_{1/(1-\eps)} n$.  For $0\le i\le t-1$, let $W_i=\sum_{j\in M_{n_i}} w(v_j,c_{v_j})$.
Since the vertices are sorted according to the weights of the requested colors, we have
$$W_i\ge \sum_{j=n_i-|M_{n_i}|+1}^{n_i} w(v_j,c_{v_j})=\sum_{j=n_{i+1}+1}^{n_i} w(v_j,c_{v_j}).$$

Consequently, $\sum_{i=0}^{t-1} W_i\ge W$, and thus for some $i$ with $0\le i\le t-1$, we have $W_i\ge W/t$.
Therefore, letting $\phi=\phi_{n_i}$, we have
$$\sum_{v\in V(G)} w(v,\phi(v))\ge W_i\ge W/t\ge \frac{w(G,L)}{d\log_{1/(1-\eps)} n},$$
showing that $w$ is $\frac{1}{d\log_{1/(1-\eps)} n}$-satisfiable.
\end{proof}

The factor $\Omega(1/\log n)$ in weighted flexibility cannot be improved, as we will show in Corollary~\ref{cor:logfactor}
using a construction described in the following lemma.
Given a graph $G$ with a list assignment $L$ and a set $S=\{v_1,\ldots, v_n\}$ of vertices of $G$ whose list contains color $1$,
we say that a set $R\subseteq\{1,\ldots,n\}$ is \emph{$S$-realizable} if there exists an $L$-coloring $\phi$ of $G$
such that $\{i:1\le i\le n, \phi(v_i)=1\}=R$.

\begin{figure}
\begin{center}
\includegraphics{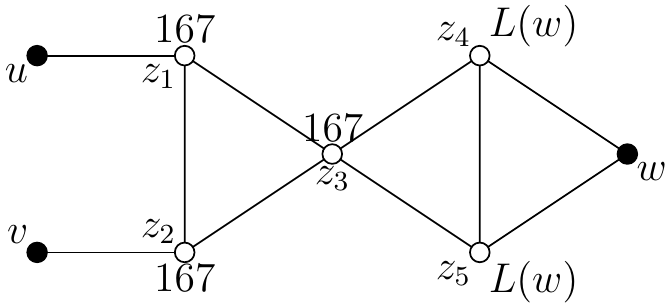}
\end{center}
\caption{A $(u,v)\to w$ gadget.}\label{fig-gadget}
\end{figure}

\begin{lemma}\label{lemma-genweight}
Let $s_1$, \ldots, $s_n$, and $t$ be positive integers.
There exists a graph $G$ with an assignment $L$ of lists of size three
and a subset $S=\{v_1,\ldots, v_n\}$ of vertices of $G$ such that
\begin{itemize}
\item $|V(G)|=O(nt)$,
\item a set $R\subseteq\{1,\ldots,n\}$ is $S$-realizable if and only if $\sum_{i\in R} s_i\le t$, and
\item if $w$ is a weighted request for $G$ and $L$ such that $w(v,1)=0$ for all $v\in S$,
then $w$ is $1/4$-satisfiable.
\end{itemize}
\end{lemma}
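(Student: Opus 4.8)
The plan is to build $G$ out of many copies of the gadget in Figure~\ref{fig-gadget}, used as a logic gate: its designated vertices $u,v,w$ carry the color $1$ in their lists, in every $L$-coloring the indicator values $[\phi(u)=1]$, $[\phi(v)=1]$, $[\phi(w)=1]$ satisfy a fixed nontrivial relation (say $\phi(w)=1$ iff $\phi(u)=1$ or $\phi(v)=1$), every admissible setting of $\phi(u),\phi(v),\phi(w)$ extends to the whole gadget, and every internal vertex of the gadget can still take each of its three colors. First I would pin down the gadget: give the size-$3$ lists on its $O(1)$ vertices and verify, by case analysis, the three properties just stated, together with a quantitative strengthening of the last one --- the gadget admits a probability distribution on its $L$-colorings under which every internal vertex takes each of its colors with probability bounded below by an absolute constant, compatibly with any fixed admissible behavior at $u,v,w$. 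Relabeling colors yields companion gates (an AND gate and a ``conflict'' constraint), giving a complete basis.

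Next I would wire these gates into a network computing the predicate $\sum_{i\in R}s_i\le t$, where $R=\{i:\phi(v_i)=1\}$. Replacing each $s_i$ by $\min(s_i,t+1)$ changes neither the family of realizable sets (any $i$ with $s_i>t$ lies in no realizable $R$) nor $|V(G)|$ up to a constant factor, so I may carry a running partial sum --- which lives in $\{0,1,\dots,t\}$ until it first exceeds $t$ --- in unary along a chain of $n$ stages of width $O(t)$; stage $i$ reads $\phi(v_i)$ and the incoming unary value and, using $O(t)$ gates, outputs that value increased by $s_i$ when $\phi(v_i)=1$ and unchanged otherwise. This gives $|V(G)|=O(nt)$. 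The delicate point is overflow: I cannot force the ``over-capacity'' information to be read off at a dedicated vertex, since every vertex outside $S$ must remain able to take each of its colors (otherwise a weighted request concentrated on the missing color at that vertex would not be $1/4$-satisfiable). Instead, overflow at stage $i$ must manifest as an over-saturated subgraph, uncolorable precisely when the incoming value plus $s_i$ exceeds $t$, yet colorable with all its vertices color-surjective in every non-overflowing scenario. Concretely I expect this to be achievable by joining $v_i$ by edges to the ``top'' unary vertices of stage $i$ (forcing them to be non-$1$ when $\phi(v_i)=1$) and, dually, by only ever creating a unary vertex whose ``true'' value is actually attained by some feasible coloring, so that it is not rigidly false. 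Verifying that such a design exists and meshes with the gate gadgets is the crux.

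Given $G$, the first two bullets are immediate: $|V(G)|=O(nt)$ by construction, and $R$ is $S$-realizable iff $\sum_{i\in R}s_i\le t$ because the network faithfully computes the predicate and every consistent choice of the colors of the $v_i$ extends to an $L$-coloring of all gate gadgets. For the third bullet I would use a one-sided variant of Lemma~\ref{lemma:distrib}: it suffices to produce a probability distribution on $L$-colorings $\phi$ of $G$ such that $\brm{Prob}[\phi(v)=c]\ge 1/4$ for every $v\in V(G)\setminus S$ and every $c\in L(v)$, and for every $v\in S$ and every $c\in L(v)\setminus\{1\}$ --- the hypothesis $w(v,1)=0$ for $v\in S$ is exactly what lets us drop the term $c=1$ at vertices of $S$ when computing $\brm{E}[\sum_v w(v,\phi(v))]$, which is why those vertices may be constrained. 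I would build this distribution by mixing: with probability $1/2$ run the per-gadget distributions from the first paragraph around the ``empty'' coloring $R=\varnothing$ (covering the gate internals, the ``false'' state of every unary vertex, and both non-$1$ colors of every $v_i$), and with probability $1/2$ sample a feasible $R$ from a carefully chosen (possibly fractional) distribution together with a compatible randomized coloring, so that every unary vertex is set to its ``true'' state with probability bounded below --- possible because each such vertex was created only when its true state is feasible.

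The main obstacle is the second paragraph: designing the overflow mechanism and the stage gadgets so that infeasible requests become genuinely uncolorable without ever pinning down a vertex outside $S$, and then checking that the resulting graph still supports the balanced distribution needed for the $1/4$ bound. The bookkeeping (sizes, the recurrence for the running sum) is routine; the interplay between rigidity of the encoding and flexibility of the coloring is where the work lies.
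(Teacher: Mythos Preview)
Your high-level plan --- encode a running unary partial sum through $n$ stages of width $O(t)$ using copies of the Figure~\ref{fig-gadget} gadget, and prove the $1/4$ bound via a distribution as in Lemma~\ref{lemma:distrib} --- is exactly the paper's approach. But two of your specific design choices diverge from the paper in ways that make your task harder than it needs to be.

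First, the gadget does \emph{not} compute an iff relation. A coloring of $u,v,w$ extends to the gadget precisely when $(\phi(u)=1\wedge\phi(v)=1)\Rightarrow\phi(w)=1$; the converse is not enforced, so you do not get a ``complete basis'' of gates. The paper never needs one: it only propagates implications forward. With vertices $x_{i,j}$ for $0\le j\le t$, gadgets $(x_{i,j},x_{i,j})\to x_{i+1,j}$ and $(v_i,x_{i,j})\to x_{i+1,j+s_i}$ force $\phi(x_{i,j})=1$ whenever some subset $R'\subseteq R$ with $\max(R')<i$ has $\sum_{R'}s_j=j$, and leave $x_{i,j}$ free otherwise. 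This one-sided encoding already suffices and is much simpler than an exact computation.

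Second, your ``main obstacle'' --- overflow without pinning any vertex outside $S$ --- is resolved in the paper by a single extra vertex $y$ with list $\{1,4,5\}$, an edge $yx_{1,0}$, gadgets $(v_i,v_i)\to x_{1,0}$ for all $i$, and gadgets $(v_i,x_{i,j})\to y$ for $j>t-s_i$. If $R\neq\emptyset$ then $x_{1,0}$ is forced to $1$, hence $y\neq 1$; overflow then forces $y=1$, a contradiction. Yet $y$ is not pinned: when $R=\emptyset$ it can take any color. Your scheme of joining $v_i$ directly to the top unary vertices would instead genuinely restrict their lists and break the third bullet.

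Finally, the paper's $1/4$-satisfiability proof is far simpler than your mixing argument: it exhibits four explicit $L$-colorings $\phi_1,\dots,\phi_4$ (with $R=\emptyset$ in each) such that every pair $(v,c)$ with $v\notin S$ or $c\neq 1$ is hit by at least one of them, and takes the uniform distribution on these four. No conditioning, no per-gadget randomization, no sampling of feasible $R$'s is needed.
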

\begin{proof}
To construct the graph $G$, we need an auxiliary construction.  Suppose we are given some graph $G'$
and vertices $u,v,w\in V(G')$ whose lists are either ${1,2,3}$ or ${1,4,5}$, with possibly $u$ and $v$ denoting
the same vertex.
By \emph{adding a $(u,v)\to w$ gadget} to $G'$ we mean adding vertices $z_1$, \ldots, $z_5$,
edges of the triangles $z_1z_2z_3$ and $z_3z_4z_5$, and edges $uz_1$, $vz_2$,
$wz_4$, and $wz_5$, with lists $L(z_1)=L(z_2)=L(z_3)=\{1,6,7\}$ and $L(z_4)=L(z_5)=L(w)$;
see Figure~\ref{fig-gadget}.
Observe that an $L$-coloring of $u$, $v$, and $w$ extends to an $L$-coloring of the
described subgraph induced by $\{u,v,w,z_1,\ldots,z_5\}$ if and only if either at least one
of $u$ and $v$ has color different from $1$, or $w$ has color $1$; i.e., $u$ and $v$ both
being colored by $1$ implies that $w$ is colored by $1$.

\begin{figure}
\begin{center}
\includegraphics{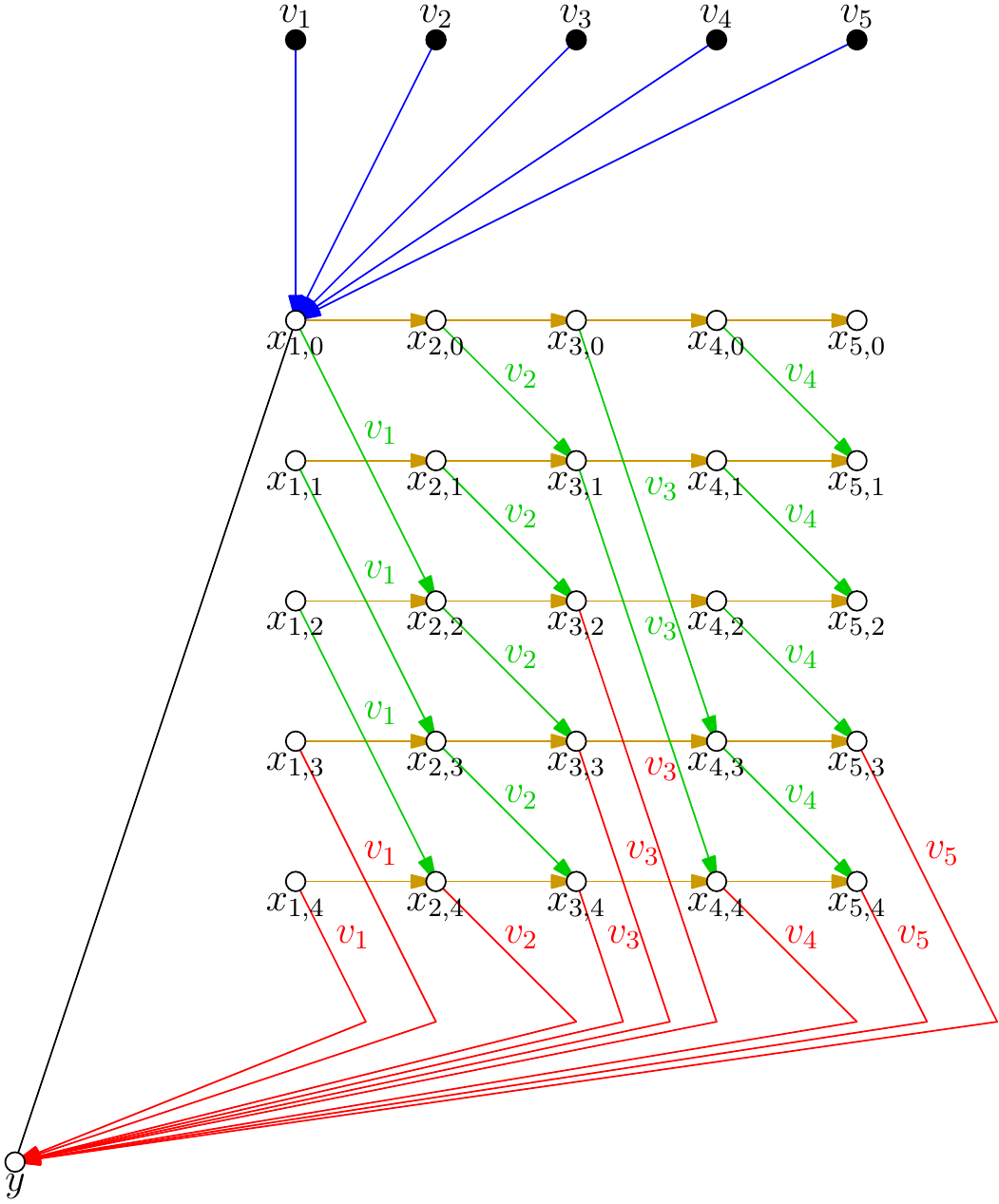}
\end{center}
\caption{Construction from Lemma~\ref{lemma-genweight}, $(s_1,\ldots,s_n)=(2,1,3,1,2)$, $t=4$.
Edges in blue, yellow, green, and red depict gadgets added in steps (a), (b), (c), and (d), respectively,
directed from the $v$-vertex to the $w$-vertex of the gadget.  When the $u$-vertex of the gadget is different from the $v$-vertex,
it is given by the label on the edge.}\label{fig-machine}
\end{figure}

The construction of $G$ (illustrated in Figure~\ref{fig-machine}) starts with vertices of $S$ with lists $\{1,2,3\}$,
and vertices $x_{i,j}$ for $1\le i\le n$ and $0\le j\le t$ and a vertex $y$ with lists $\{1,4,5\}$,
and an edge $yx_{1,0}$.  Next,
\begin{itemize}
\item[(a)] for $1\le i\le n$, add a $(v_i,v_i)\to x_{1,0}$ gadget,
\item[(b)] for $1\le i\le n-1$ and $0\le j\le t$, add an $(x_{i,j},x_{i,j})\to x_{i+1,j}$ gadget,
\item[(c)] for $1\le i\le n-1$ and $0\le j\le t-s_i$, add a $(v_i,x_{i,j})\to x_{i+1,j+s_i}$ gadget, and
\item[(d)] for $1\le i\le n$ and $t-s_i+1\le j\le t$, add a $(v_i,x_{i,j})\to y$ gadget.
\end{itemize}
Note that $|V(G)|<11n(t+2)$.

Consider now the $L$-colorings $\phi_1$, \ldots, $\phi_4$ of $G$ in which vertices get colors
according to the following table (the last two columns display the colors assigned to the vertices
of the gadgets added in steps (a), (b), (c), or (d), respectively).

\vspace{-3mm}

\begin{center}
\begin{tabular}{|c|c|c|c|c|c|}
\hline
Coloring& $S$ & $x_{i,j}$ & $y$ & (a), (b), (c) & (d) \\
& & $(1\le i\le n, 0\le j\le t)$ & & $z_1,z_2,z_3,z_4,z_5$ & $z_1,z_2,z_3,z_4,z_5$ \\
\hline
$\phi_1$ & $2$ & $1$ & $4$ & $6,7,1,4,5$ & $1,6,7,1,5$ \\
$\phi_2$ & $3$ & $1$ & $5$ & $7,6,1,5,4$ & $1,7,6,4,1$ \\
$\phi_3$ & $3$ & $4$ & $1$ & $1,6,7,1,5$ & $6,1,7,5,4$ \\
$\phi_4$ & $3$ & $5$ & $1$ & $7,1,6,4,1$ & $7,6,1,4,5$ \\
\hline
\end{tabular}
\end{center}

\smallskip

Let $\phi$ be chosen as one of $\phi_1$, \ldots, $\phi_4$ uniformly at random.  Then for all $v\in V(G)$ and $c\in L(v)$
such that either $v\not\in S$ or $c\neq 1$ we have $\brm{Prob}[\phi(v)=c]\ge 1/4$, implying as in Lemma~\ref{lemma:distrib}
that $\phi$ with non-zero probability
$1/4$-satisfies any weighted request such that $w(v,1)=0$ for all $v\in S$.

Consider now any $R\subseteq\{1,\ldots,n\}$; when does an $L$-coloring $\phi$ of $G$ such that $\{i:1\le i\le n, \phi(v_i)=1\}=R$ exist?
If $R=\emptyset$, then this is the case due to say the $L$-coloring $\phi_1$; hence, suppose that $R\neq\emptyset$.
In that case, the gadgets (a) force $\phi(x_{1,0})=1$, and because of the edge $x_{1,0}y$, we may without loss of generality assume $\phi(y)=4$.
Note that since colors $2$ and $3$ only appear in the lists of vertices of $S$, we can without loss of generality assume that $\phi(v_i)=2$ for $i\in \{1,\ldots,n\}\setminus R$.
Gadgets (b) and (c) force that for $1\le i\le n$, if there exists a set $R'\subseteq R$ with $\max(R')<i$ and $t'\colonequals \sum_{j\in R'} s_j\le t$, then
$\phi(x_{i,t'})=1$.  If $\sum_{j\in R} s_j>t$, the gadgets (d) prevent the existence of $\phi$, since $\phi(y)\neq 1$.
Otherwise, we can set $\phi(x_{i,j})=4$ for all $i$ and $j$ whose color is not forced by a set $R'\subseteq R$ as described before,
and extend $\phi$ to an $L$-coloring of $G$ by choosing the colorings in the gadgets arbitrarily.
\end{proof}

\begin{corollary}\label{cor:logfactor}
For every positive integer $k$, there exists a graph $G$ with $O(4^k)$ vertices and an assignment $L$ of lists of size three
such that they are $1/6$-flexible and not weighted $\eps$-flexible for any $\eps>1/k$.
\end{corollary}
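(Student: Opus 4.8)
The plan is to apply Lemma~\ref{lemma-genweight} to a knapsack-type instance whose item sizes and multiplicities are balanced so that, on the one hand, a knapsack of capacity $t$ can hold (by count) half of any sub-collection of the items, and on the other hand no realizable set carries more than a $1/k$-fraction of a well-chosen weight. Concretely, fix $k$, put $n=2^k-1$, and partition $\{1,\dots,n\}$ into $k$ blocks where block $j$ (for $1\le j\le k$) has $m_j:=2^{k-j}$ indices; declare $s_i:=2^j$ for every index $i$ in block $j$ and set $t:=2^k$. Since $\sum_{j=1}^k m_j=2^k-1=n$, Lemma~\ref{lemma-genweight} yields a graph $G$ with an assignment $L$ of lists of size three and $S=\{v_1,\dots,v_n\}$ such that $|V(G)|=O(nt)=O(4^k)$, a set $R\subseteq\{1,\dots,n\}$ is $S$-realizable exactly when $\sum_{i\in R}s_i\le t$, and every weighted request that is zero on all pairs $(v,1)$ with $v\in S$ is $1/4$-satisfiable.

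For the lower bound, I would take the weighted request $w$ defined by $w(v_i,1)=s_i$ and $w\equiv 0$ on every other pair. Then $w(G,L)=\sum_{i} s_i=\sum_{j=1}^k m_j 2^j=k\cdot2^k$, while for any $L$-coloring $\phi$ the set $R=\{i:\phi(v_i)=1\}$ is $S$-realizable, so $\sum_{v}w(v,\phi(v))=\sum_{i\in R}s_i\le t=2^k=w(G,L)/k$. Thus no $L$-coloring witnesses $\eps$-satisfiability of $w$ for any $\eps>1/k$, and hence $G$ and $L$ are not weighted $\eps$-flexible for any such $\eps$.

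For $1/6$-flexibility, the plan is to reduce to the combinatorial claim that for every $I\subseteq\{1,\dots,n\}$ the $\lceil|I|/2\rceil$ smallest values among $\{s_i:i\in I\}$ sum to at most $t$; equivalently, some $S$-realizable $R\subseteq I$ has $|R|\ge|I|/2$. Assuming this, given any request $r$ with domain $D$, write $D=D_1\cup D_2$ with $D_1=\{v\in D\cap S:r(v)=1\}$. If $|D_1|\ge|D|/3$, apply the claim with $I=\{i:v_i\in D_1\}$ to get an $S$-realizable $R\subseteq I$ with $|R|\ge|D_1|/2\ge|D|/6$, and the $L$-coloring realizing $R$ (as produced in the last paragraph of the proof of Lemma~\ref{lemma-genweight}) agrees with $r$ on all of $\{v_i:i\in R\}$. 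If $|D_1|<|D|/3$, then $|D_2|>\tfrac23|D|$; choosing $\phi$ uniformly among the colorings $\phi_1,\dots,\phi_4$ from the proof of Lemma~\ref{lemma-genweight} (none of which colors a vertex of $S$ with $1$), each $v\in D_2$ satisfies $\brm{Prob}[\phi(v)=r(v)]\ge1/4$, so by linearity of expectation some $\phi_i$ agrees with $r$ on at least $|D_2|/4>|D|/6$ vertices of $D_2$. In either case $r$ is $1/6$-satisfiable.

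The remaining obstacle, and the only nonroutine point, is the combinatorial claim. Here is how I would argue it. Let $c=\lceil|I|/2\rceil$ (assume $c\ge1$), let $2^q$ be the largest size occurring among the $c$ smallest items of $I$, and let $m$ be the number of items of $I$ of size $<2^q$; then $0\le m\le c-1$, and the $c$ smallest items are those $m$ items together with $c-m$ items of size $2^q$. As each item of size $<2^q$ has size at most $2^{q-1}$, their total is at most $m\cdot2^{q-1}+(c-m)\cdot2^q=c\cdot2^q-m\cdot2^{q-1}$. The number of items of $I$ of size at least $2^q$ is at most $\sum_{j=q}^k 2^{k-j}=2^{k-q+1}-1$; it is also at least $|I|-c\ge c-1$, whence $c\le2^{k-q+1}$, and, using $|I|\ge2c-1$, also $m=|I|-\#\{i\in I:s_i\ge2^q\}\ge2c-2^{k-q+1}$. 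If $c\le2^{k-q}$ then $c\cdot2^q-m\cdot2^{q-1}\le c\cdot2^q\le2^k=t$; otherwise $m\ge2c-2^{k-q+1}\ge0$ and $c\cdot2^q-m\cdot2^{q-1}\le c\cdot2^q-(2c-2^{k-q+1})2^{q-1}=2^{k-q+1}\cdot2^{q-1}=2^k=t$. This establishes the claim, completing the proof.
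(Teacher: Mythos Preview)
Your proof is correct and follows essentially the same approach as the paper: the same knapsack instance (yours is the paper's scaled by a factor of~$2$), the same weighted request $w(v_i,1)=s_i$ for the negative direction, and the same split of an arbitrary request into the part asking for color~$1$ on $S$ and the complementary part for the positive direction. The only cosmetic differences are that the paper combines the two resulting colorings by taking the better one (using $\max(a/2,b/4)\ge(a+b)/6$) rather than your threshold case split, and that the paper dispatches the combinatorial claim in one line: with its indexing, $\sum_{j=i}^{2i-1}s_j=t$ for every $i$, so the $\lceil|R_1|/2\rceil$ largest indices of $R_1$ automatically form a realizable set. Your block-counting argument reaches the same conclusion, just with more bookkeeping.
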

\begin{proof}
Let $G$, $S=\{v_1,\ldots,v_n\}$, and $L$ be constructed using Lemma~\ref{lemma-genweight} with $n=2^k-1$, $s_i$ with $1\le i\le n$
equal to the largest power of two such that $is_i\le 2^k-1$ (i.e., $s_1=2^{k-1}$, $s_2=s_3=2^{k-2}$, \ldots, $s_{2^{k-1}}=\ldots=s_n=1$),
and $t=2^{k-1}$.  Observe that if $1\le i_1\le i_2\le n$ satisfy $i_2\le 2i_1-1$, then $\sum_{j=i_1}^{i_2} s_j\le 2^{k-1}$.

Consider any request $r$.  Let $r_1$ and $r_2$ be the restrictions of $r$ to
$\brm{dom}(r_1)=\{v\in S\cap \brm{dom}(r):r(v)=1\}$ and $\brm{dom}(r_2)=\brm{dom}(r)\setminus \brm{dom}(r_1)$.
According to Lemma~\ref{lemma-genweight}, the request $r_2$ is $1/4$-satisfiable, and thus there exists an $L$-coloring
$\phi_2$ of $G$ such that $|\{v\in\brm{dom}(r_2):\phi_2(v)=r(v)\}|\ge |\brm{dom}(r_2)|/4$.
Furthermore, let $R_1=\{i:1\le i\le n, v_i\in \brm{dom}(r_1)\}$
and let $R$ consist of $\lceil |R_1|/2\rceil$ largest elements of $R_1$.  Since the sequence $s_1$, $s_2$, \ldots, $s_n$
is non-increasing, we have
$$\sum_{i\in R} s_i\le \sum_{i=\lfloor |R_1|/2\rfloor+1}^{|R_1|} s_i\le 2^{k-1},$$
and the $L$-coloring $\phi_1$ such that $\phi_1(v_i)=1$ for $i\in R_1$ that exists
according to Lemma~\ref{lemma-genweight} shows that $r_1$ is $1/2$-satisfiable.
Considering $i\in\{1,2\}$ with larger $|\{v\in\brm{dom}(r):\phi_i(v)=r(v)\}|$,
we conclude that $r$ is $1/6$-satisfiable, and consequently $G$ and $L$ are $1/6$-flexible.

Let us now consider the weighted request $w$ such that $w(v_i,1)=s_i$ for $1\le i\le n$ and
$w(v,c)=0$ if $v\not\in S$ or $c\neq 1$.  We have $w(G,L)=k2^{k-1}$,
and according to Lemma~\ref{lemma-genweight}, every $L$-coloring $\phi$ of $G$ satisfies
$$\sum_{1\le i\le n, \phi(v_i)=1} s_i\le 2^{k-1},$$
showing that $w$ is not $\eps$-satisfiable for any $\eps>1/k$.
\end{proof}

Also, we have another interesting corollary: consider the graph $G$ and list assignment $L$ of
Lemma~\ref{lemma-genweight} with $s_1=\ldots=s_n=1$ and $t=1$.  Then any request with singleton domain
is $1$-satisfiable, but $G$ with $L$ are not $\eps$-flexible for any $\eps>1/n$.

\section{Flexibility of degenerate graphs}\label{s:degen}

We will need the following simple result on list coloring.
\begin{lemma}\label{lemma:degcol}
Let $G$ be a connected graph and let $L$ be a list assignment for $G$ such that $|L(v)|>\deg(v)$ for all $v\in V(G)$.
Then for every $v\in V(G)$ and a color $c$, there exists an $L$-coloring of $G$ such that no vertex other than $v$ is
assigned the color $c$.
\end{lemma}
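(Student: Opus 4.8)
The plan is to prove the statement by induction on $|V(G)|$, distinguishing cases according to the structure of $G$ near $v$. The base case $|V(G)|=1$ is immediate: the vertex $v$ has an empty list constraint coming from no neighbors, so any color of $L(v)$ works and there is no ``other'' vertex to worry about. For the inductive step, I would first handle the easy case where $G$ has a vertex $u\neq v$ with $\deg(u)=1$ (a leaf other than $v$). Then $G-u$ is connected, still satisfies $|L(x)|>\deg_{G-u}(x)$ for all $x$, and contains $v$; by induction there is an $L$-coloring of $G-u$ in which only $v$ may receive $c$. Since $|L(u)|>1$, we can extend to $u$ by avoiding both the color of its unique neighbor and the color $c$ — possible because $|L(u)|\ge 2$. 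So from now on we may assume $G$ has minimum degree at least $2$, except possibly at $v$ itself.

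The heart of the argument is the case where $G$ has minimum degree at least $2$ everywhere (including at $v$, or handling $v$ separately if $\deg(v)\le 1$, which is itself an easy sub-case). Here I would pick a neighbor $u$ of $v$ and consider $G-v$. The subtlety is that $G-v$ need not be connected, and deleting $v$ may drop some degrees so far that $|L(x)|>\deg(x)$ fails only in the trivial direction (it can only help), so the degeneracy hypothesis is preserved; but connectivity is lost. So instead I would argue more carefully: root the coloring at $v$ and use a degeneracy/greedy ordering of $V(G)\setminus\{v\}$. Concretely, since every vertex of $G$ other than $v$ has $|L(x)|>\deg_G(x)$ and $G$ is connected, the graph $G-v$ is $(\Delta-1)$-``list-degenerate'' in the sense that we can order $V(G)\setminus\{v\}$ as $u_1,\dots,u_{n-1}$ so that each $u_i$ has a neighbor among $\{v,u_1,\dots,u_{i-1}\}$ (a BFS/DFS order from $v$); then each $u_i$ has at most $\deg_G(u_i)-1$ neighbors later... this is the wrong direction. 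The clean way: order so that each $u_i$ has at least one neighbor \emph{earlier} or equal to index, hence at most $\deg_G(u_i)-1$ neighbors strictly after it is false in general. Let me instead do the following robust argument.

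I would fix $v$ and a color $c\in L(v)$, assign $\phi(v)=c$, and then greedily color the remaining vertices in an order $u_1,\dots,u_{n-1}$ chosen by repeatedly removing a vertex of minimum degree in the current graph (a degeneracy order of $G-v$, which has degeneracy at most $\max_x\deg_G(x)-1$ is not quite what I need either). The genuinely clean approach, and the one I expect the authors use, is: since $G$ is connected with $|L(x)|>\deg(x)$ for $x\neq v$, repeatedly delete degree-$1$ vertices other than $v$ (reducing to the leaf case already handled) or, when no such vertex exists, delete $v$ and observe each component $C$ of $G-v$ contains a vertex $w$ adjacent to $v$ in $G$, with $|L(w)|>\deg_G(w)\ge\deg_C(w)+1$, so $w$ can play the role that $v$ played, but now we must \emph{forbid} color $c$ on $w$ too — which is fine because $w$ has a spare color: $|L(w)|\ge\deg_C(w)+2$. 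Apply induction to $C$ with forbidden color $c$ at $w$: formally, I would strengthen the induction hypothesis to allow a prescribed ``forbidden'' color at one vertex as well, or simply note $|L(w)\setminus\{c\}|>\deg_C(w)$ still holds and run induction on $C$ rooted at $w$ to get an $L$-coloring of $C$ avoiding $c$ on $w$, in which at most $w$ gets $c$ — but we want \emph{no} vertex of $C$ to get $c$. So the correct strengthened statement to induct on is: \emph{for every $v$ and every color $c$, and every choice of whether $v$ itself is allowed color $c$, there is an $L$-coloring with no vertex other than $v$ colored $c$}; in the component $C$ we simply forbid $c$ at $w$ entirely and use that each vertex of $C$ has list strictly larger than its $C$-degree, and $w$ even after deleting $c$ still does, giving a full $L$-coloring of $C$ avoiding $c$ altogether via an ordinary greedy/degeneracy argument. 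Piecing the components together with $\phi(v)=c$ completes the coloring.

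The main obstacle is exactly the connectivity issue described above: deleting $v$ disconnects $G$, so one cannot apply the induction hypothesis verbatim, and one must identify, in each component, a boundary vertex with enough slack in its list to absorb the constraint of avoiding $c$. Once that bookkeeping is set up — which rests on the strict inequality $|L(x)|>\deg(x)$ giving every vertex at least one free color, and the boundary vertices of $G-v$ even two free colors — the rest is a routine greedy argument, and the whole lemma follows by induction on the number of vertices.
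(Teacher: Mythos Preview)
Your leaf case is genuinely wrong, and the rest of the plan leans on it. You claim that if $u\neq v$ is a leaf, then after obtaining by induction an $L$-coloring of $G-u$ in which only $v$ may receive $c$, you can extend to $u$ by avoiding both the color of its unique neighbor and the color $c$, ``possible because $|L(u)|\ge 2$''. But $|L(u)|\ge 2$ only lets you avoid \emph{one} color, not two. Concretely, take the path $v\,\text{--}\,w\,\text{--}\,u$ with $L(v)=\{1,2\}$, $L(w)=\{1,2,3\}$, $L(u)=\{1,2\}$ and $c=1$. A perfectly legal outcome of the induction on $G-u$ is $\phi(v)=1$, $\phi(w)=2$; then $u$ must avoid both $1$ and $2$, which is impossible from $L(u)=\{1,2\}$. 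Of course the lemma is true for this example (take $\phi(w)=3$ instead), but your inductive step does not let you steer the coloring of $G-u$.

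Ironically, the approach you started to write down and then abandoned---ordering $V(G)\setminus\{v\}$ by a BFS/DFS from $v$---is exactly what works, once you color in the \emph{reverse} of that order. Remove $c$ from every list except $L(v)$; then each $w\neq v$ still has $|L'(w)|\ge\deg(w)$. Take a spanning tree rooted at $v$ and color greedily so that every vertex is processed before its parent: when you color $w\neq v$, its parent is still uncolored, so at most $\deg(w)-1$ neighbors constrain it, and $|L'(w)|\ge\deg(w)$ suffices; finally $|L'(v)|=|L(v)|>\deg(v)$ handles $v$. This is the paper's proof, in three lines. Your ``genuinely clean'' route via deleting $v$ and coloring each component of $G-v$ avoiding $c$ entirely can also be made to work (each component has a vertex adjacent to $v$ with strict slack even after deleting $c$, so the same spanning-tree greedy applies inside the component), but as written it is tangled with the faulty leaf case and with a confused attempt to strengthen the induction hypothesis; the direct spanning-tree argument avoids all of that.
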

\begin{proof}
Let $L'$ be the list assignment for $G$ such that $L'(w)=L(w)\setminus\{c\}$ for $w\in V(G)\setminus \{v\}$
and $L'(v)=L(v)$.  Let $T$ be a spanning tree of $G$ rooted in $v$.  List the vertices of $G$ in the reverse
DFS order of $T$ and color them greedily from the list assignment $L'$; this is possible since when coloring
a vertex $w\neq v$, at least one neighbor of $w$ (its parent in $T$) is still uncolored, and thus the number of colors in $L'(w)$
(which is at least $\deg(w)$ by the assumptions) is greater than the number of already colored neighbors of $w$.
\end{proof}

We say that a graph $G$ is \emph{weakly $d$-degenerate} if for every subgraph $G'$ of $G$,
either the minimum degree of $G'$ is at most $d$, or $G'$ contains a set of $d+1$ vertices of
degree $d+1$ inducing a connected subgraph.
Both Theorems~\ref{t:degenerate2-weight} and \ref{t:mad2} are consequences of the following claim.

\begin{lemma}\label{lemma:flex-wdeg}
For every integer $d\ge 0$, there exist $\eps,\delta>0$ as follows.
Let $G$ be a graph and let $L$ be an assignment of lists of size $d+2$ to vertices of $G$.
If $G$ is weakly $d$-degenerate, then there exists a probability distribution on $L$-colorings $\phi$ of $G$ such that
\begin{itemize}
\item[\textrm{(i)}] for all $v\in V(G)$ and $c\in L(v)$ we have $\brm{Prob}[\phi(v)=c]\ge\eps$, and
\item[\textrm{(ii)}] for any set $S\subseteq V(G)$ of size at most $d$ and for any color $c$,
$$\brm{Prob}[(\forall v\in S)\,\phi(v)\neq c]\ge\delta^{|S|}.$$
\end{itemize}
\end{lemma}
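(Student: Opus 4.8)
The plan is to prove Lemma~\ref{lemma:flex-wdeg} by induction on $|V(G)|$, using the weak $d$-degeneracy to peel off either a single low-degree vertex or a connected set of $d+1$ vertices of degree exactly $d+1$, building the distribution from the corresponding distribution on the smaller graph.

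\medskip

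\textbf{Base case and setup.} If $V(G)=\emptyset$ the claim is trivial. Otherwise, since $G$ is weakly $d$-degenerate, either (A) there is a vertex $u$ with $\deg_G(u)\le d$, or (B) there is a set $U$ of $d+1$ vertices, each of degree $d+1$, inducing a connected subgraph. In either case let $G'=G-U$ (with $U=\{u\}$ in case (A)); note $G'$ is also weakly $d$-degenerate since every subgraph of $G'$ is a subgraph of $G$. Apply the induction hypothesis to $G'$ with the restricted list assignment to get a distribution on $L$-colorings $\phi'$ of $G'$ satisfying (i) and (ii) with constants $\eps',\delta'$. The task is then to extend $\phi'$ at random to the vertices of $U$ so that (i) and (ii) hold on all of $G$, which will pin down how $\eps,\delta$ must depend on $\eps',\delta'$ (and hence, by unwinding the recursion, on $d$).

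\medskip

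\textbf{The extension step.} In case (A): having sampled $\phi'$, the vertex $u$ has at most $d$ neighbors in $G'$, so at least $2$ colors of $L(u)$ remain available; choose one of the available colors uniformly at random. For a given target color $c\in L(u)$, property (ii) applied to $S=N_G(u)$ (size $\le d$) in $G'$ says that with probability $\ge (\delta')^{d}$ no neighbor of $u$ is colored $c$, and conditioned on that event $\phi(u)=c$ with probability $\ge 1/(d+2)$, giving $\brm{Prob}[\phi(u)=c]\ge (\delta')^{d}/(d+2)$, which handles (i) at $u$. For (ii) with a set $S\ni u$: split off $u$, and bound $\brm{Prob}[(\forall v\in S)\,\phi(v)\ne c]$ from below by $\brm{Prob}[(\forall v\in S\setminus u)\,\phi(v)\ne c]$ times the conditional probability that $\phi(u)\ne c$, the latter being at least $1/(d+2)$ since at least two colors are always available and at most all but one can equal $c$ --- actually one must be slightly careful and condition also on the colors of $N_G(u)\setminus S$, but this only loses another controlled factor. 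For sets $S$ disjoint from $U$, (i) and (ii) are inherited directly from $\phi'$ since the marginal of $\phi$ on $V(G')$ is exactly $\phi'$. Case (B) is the same idea iterated: order $U=u_1,\dots,u_{d+1}$ so each $u_i$ is adjacent to some earlier $u_j$ (possible since $G[U]$ is connected); color them greedily in this order, each $u_i$ having at most $d+1$ colored neighbors total but one of those ($u_j$) lies in $U$ --- so among its $\le d+1$ neighbors in $G$ at most $d$ are outside $U$ and at most $i-1$ are inside, and we need the colored-neighbor count to be $\le d+1$, leaving $\ge 1$ choice; to get $\ge 2$ choices and run the same marginal argument we instead use that $u_i$ has $\le d$ neighbors in $G-U$ plus $\le i-1$ in $U$, and handle the $i=d+1$ corner case using that at that point the previously-colored $u_1,\dots,u_d$ might not all receive distinct colors, or more robustly by conditioning on the event (of probability bounded below via (ii) on $G'$) that the outside-neighbors avoid two prescribed colors.

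\medskip

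\textbf{Main obstacle.} The genuinely delicate point is case (B), the $i=d+1$ vertex: naively it can have $d+1$ neighbors outside $U$... no --- it has degree exactly $d+1$ in $G$, of which at least one is inside $U$ (connectivity), so at most $d$ are outside, and among the inside ones at most $d$ are already colored; the worry is whether those up-to-$d$ colored vertices of $U$ together with the up-to-$d$ outside neighbors can block all but zero colors. Resolving this is exactly why the hypothesis demands a \emph{connected} set of $d+1$ vertices of degree $d+1$ rather than $d$ such vertices: the internal edges of $G[U]$ guarantee that for each $u_i$ the outside-degree is at most $d$, so property (ii) of the induction hypothesis (a bound on avoiding a color on a set of size $\le d$) is precisely strong enough to ensure $\ge 2$ colors survive for every vertex of $U$ with probability bounded below by $(\delta')^{d(d+1)}$ or so. The remaining work is bookkeeping: choosing $\eps=\eps'(\delta')^{O(d^2)}$ and $\delta=\delta'\cdot(d+2)^{-O(d)}$ and checking these recursions have a positive solution depending only on $d$ (e.g.\ solve for the fixed point, or note the graph has bounded-size pieces peeled off so the depth of recursion times the per-step loss stays bounded --- actually the recursion depth is $|V(G)|$, so one must instead verify directly that the pair $(\eps,\delta)$ can be taken \emph{independent of $n$}, which works because each step multiplies $\eps',\delta'$ by a factor depending only on $d$ \emph{and} the new lower bounds we need, $\eps$ and $\delta$, are required to be $\le$ those factored-down values, so we simply pick $\eps,\delta$ small enough that $\eps \le \eps\cdot(\delta)^{O(d^2)}$-type inequalities... ). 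The cleanest fix, and the one I would actually write, is to not recurse on $\eps',\delta'$ at all but to first prove the purely structural claim that a weakly $d$-degenerate graph admits an ordering of $V(G)$ into blocks (singletons or connected $(d+1)$-sets) with the back-degree property above, then define $\phi$ by a single greedy random process along this ordering, and prove (i),(ii) for this fixed process directly with $\eps,\delta$ depending only on $d$ --- this avoids the fragile induction-on-$n$ constant-tracking entirely.
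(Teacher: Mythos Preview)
Your skeleton---induct on $|V(G)|$, peel off $P$ equal to a single low-degree vertex or a connected $(d{+}1)$-set of degree-$(d{+}1)$ vertices, extend a random coloring of $G-P$---is exactly the paper's. But two things are mishandled.

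\textbf{Constants.} The paper does not recurse on $\eps',\delta'$. It fixes $\delta=1/(d+2)^{d+1}$ and $\eps=\delta^{d+1}$ at the outset and checks that the inductive step reproduces these same bounds. Your ``cleanest fix'' is essentially this, but there is no need for the block decomposition: the ordinary induction already works once the constants are pinned.

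\textbf{The extension on $P$ in case (B).} This is the real gap. Your per-vertex greedy coloring of $u_1,\dots,u_{d+1}$ in a \emph{fixed} order does not give (ii) for every $S$: the last vertex $u_{d+1}$ may have only one available color, and if $u_{d+1}\in S$ and that color is $c$, you are stuck. You could rescue this by rooting the order at a vertex of $P\setminus S$ (so every vertex of $S\cap P$ has an uncolored $P$-neighbor when it is processed, hence $\ge 2$ available colors), but then the ordering depends on $S$, so a single greedy distribution cannot serve all $S$ at once. Your suggestion to condition on outside-neighbors avoiding \emph{two} colors is not supported by (ii), which only controls avoidance of one color. The paper's move is to sample $\phi_1$ \emph{uniformly over all $L'$-colorings of $G[P]$}, where $L'(v)=L(v)\setminus\{\phi_0(u):u\notin P, uv\in E(G)\}$. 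Since each $v\in P$ has at most $d$ neighbors outside $P$, one gets $|L'(v)|\ge 1+\deg_{G[P]}(v)$, and Lemma~\ref{lemma:degcol} (for a connected graph $H$ with $|L'(v)|>\deg_H(v)$, and any chosen vertex $w$ and color $c$, there is an $L'$-coloring in which no vertex other than $w$ gets $c$) applies to $G[P]$. For (ii), take $w\in P\setminus S$, which exists since $|S|\le d<d+1=|P|$; this yields an $L'$-coloring avoiding $c$ on all of $S\cap P$, and uniform sampling gives it probability $\ge 1/(d+2)^{d+1}=\delta$. For (i), take $w=v$ and then recolor $v$ to $c$; again probability $\ge\delta$, combined with $\brm{Prob}[c\in L'(v)]\ge\delta^d$ from (ii) applied to the $\le d$ outside neighbors of $v$, giving $\ge\delta^{d+1}=\eps$. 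The interplay between $|S|\le d$, $|P|=d+1$, and Lemma~\ref{lemma:degcol} is the idea you are missing.
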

\begin{proof}
Let $\delta=1/(d+2)^{d+1}$ and $\eps=\delta^{d+1}$.  We prove the lemma by induction on $|V(G)|$.
If $G$ contains a vertex $w$ of degree at most $d$, then let $P=\{w\}$.  Otherwise, since $G$
is weakly $d$-degenerate, there exists a set $P$ of $d+1$ vertices of degree $d+1$ inducing a connected
subgraph of $G$.  A random $L$-coloring $\phi$ of $G$ is chosen as follows: We choose an $L$-coloring
$\phi_0$ of $G-P$ at random from the probability distribution obtained by the induction hypothesis.
Let $L'$ be the list assignment for $G[P]$ such that
$$L'(v)=L(v)\setminus\{\phi_0(u):uv\in E(G), u\not\in P\}$$
for all $v\in P$.  We choose an $L'$-coloring $\phi_1$ uniformly at random among all $L'$-colorings of $G[P]$,
and we let $\phi$ be the union of the colorings $\phi_0$ and $\phi_1$.

First, let us show that condition (ii) holds.  Let $S_1=S\setminus P$ and $S_2=S\cap P$.
By the induction hypothesis for $G-P$, we have $\brm{Prob}[(\forall v\in S_1)\,\phi(v)\neq c]=\brm{Prob}[(\forall v\in S_1)\,\phi_0(v)\neq c]\ge\delta^{|S_1|}$.
If $S_1=S$, this implies (ii).  Hence, suppose that $|S_1|\le |S|-1$.
Let us fix $\phi_0$, and consider the probability that $\phi_1$ gives all vertices of $S_2$ color different from $c$.
If $P$ consists of a single vertex $w$ of degree at most $d$, then $|L'(w)|\ge 2$, and thus $\phi_1(w)\neq c$ with probability
at least $1/2$.  Otherwise, $P$ consists of $d+1$ vertices of degree $d+1$.
Observe that $|L'(v)|>\deg_{G[P]} v$ for all $v\in P$. Since $G[P]$ is connected and $|P|=d+1>|S_2|$, Lemma~\ref{lemma:degcol}
implies that there exists an $L'$-coloring of $G[P]$ in that no vertex of $S_2$ is assigned color $c$.
Since $\phi_1$ is chosen uniformly among the at most $(d+2)^{d+1}$ $L'$-colorings of $G[P]$, the
probability that no vertex of $S_2$ is assigned color $c$ by $\phi_1$ is at least $1/(d+2)^{d+1}=\delta$.

Consequently, conditionally under the assumption that $\phi_0$ does not assign color $c$ to
any vertex of $S_1$, the probability that $\phi_1$ does not assign color $c$ to any vertex of $S_2$ is at least $\delta$.
Hence,
$$\brm{Prob}[(\forall v\in S)\phi(v)\neq c]\ge \brm{Prob}[(\forall v\in S_1)\phi_0(v)\neq c]\cdot\delta\ge \delta^{|S_1|+1}\ge \delta^{|S|}$$
as required.

Let us now prove that (i) holds.  Consider any vertex $v\in V(G)$ and a color $c\in L(v)$.  If $v\not\in P$,
then $\brm{Prob}[\phi(v)=c]=\brm{Prob}[\phi_0(v)=c]\ge \eps$ by the induction hypothesis for $G-P$.
Hence, assume that $v\in P$. Let $S$ be the set of neighbors
of $v$ in $V(G)\setminus P$; we have $|S|\le d$, and thus by (ii), with probability
at least $\delta^d$ no vertex of $S$ is assigned color $c$ by $\phi_0$.  If that is the case, we have $c\in L'(v)$.
Trivially if $|P|=1$ and by Lemma~\ref{lemma:degcol} if $|P|=d+1$, there exists an $L'$-coloring
of $G[P]$ with no vertex other than $v$ colored by $c$.  We can recolor $v$ with $c$ if needed, showing that there
exists an $L'$-coloring of $G[P]$ in that $v$ is assigned color $c$.  Since $\phi_1$ is chosen uniformly among the
$L'$-colorings of $G[P]$, it follows that conditionally under the assumption that no vertex of $S$ is colored by $c$,
the probability that $\phi(v)=c$ is at least $\delta$.  Therefore,
$\brm{Prob}[\phi(v)=c]\ge \delta^d\cdot\delta=\eps$.
\end{proof}

Since every $d$-degenerate graph is also weakly $d$-degenerate, Lemmas~\ref{lemma:distrib} and \ref{lemma:flex-wdeg}(i)
imply Theorem~\ref{t:degenerate2-weight}.
Similarly, to prove Theorem~\ref{t:mad2}, it suffices show weak $d$-degeneracy of graphs of maximum average degree
less than $d+1+2/(d+4)$, which is a consequence of the following lemma.

\begin{lemma}\label{lemma:sg}
Let $d\ge 0$ be an integer.  If a graph $G$ has average degree less than $d+1+2/(d+4)$,
then either $G$ contains a vertex of degree at most $d$, or a vertex of degree $d+1$
with at most one neighbor of degree greater than $d+1$.
\end{lemma}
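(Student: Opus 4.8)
The plan is to prove the contrapositive by a discharging argument. Suppose $G$ has minimum degree at least $d+1$ and every vertex of degree exactly $d+1$ has at least two neighbors of degree greater than $d+1$; I will show the average degree is at least $d+1+2/(d+4)$. Assign to each vertex $v$ an initial charge equal to $\deg(v)$, so the total charge is $2|E(G)|$ and the average charge is the average degree. I want to redistribute charges so that every vertex ends with charge at least $d+1+2/(d+4)$, which will give the desired bound on $2|E(G)|/|V(G)|$.

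The discharging rule I would use: every vertex of degree at least $d+2$ sends charge $\alpha$ along each incident edge to a neighbor of degree exactly $d+1$, where $\alpha$ is a parameter to be tuned (I expect $\alpha = 2/((d+4)(d+2))$ or something comparable). A vertex $v$ of degree $d+1$ has at least two neighbors of degree at least $d+2$ by assumption, so it receives at least $2\alpha$ and ends with charge at least $d+1+2\alpha$; choosing $\alpha$ so that $2\alpha \ge 2/(d+4)$, i.e. $\alpha\ge 1/(d+4)$, handles these vertices — but that may be too large, so the real constraint comes from the high-degree vertices. A vertex $v$ of degree $k\ge d+2$ sends out at most $k\alpha$ (it sends only to its degree-$(d+1)$ neighbors, of which there are at most $k$), so it ends with charge at least $k - k\alpha = k(1-\alpha)$. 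I need $k(1-\alpha)\ge d+1+2/(d+4)$ for all $k\ge d+2$; since the left side is increasing in $k$, it suffices to check $k=d+2$, giving the condition $(d+2)(1-\alpha)\ge d+1+2/(d+4)$.

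The key step is checking that a single value of $\alpha$ satisfies both constraints simultaneously: $\alpha\ge 1/(d+4)$ from the low-degree vertices and $(d+2)(1-\alpha)\ge d+1+2/(d+4)$ from the high-degree vertices. The second inequality rearranges to $\alpha\le 1 - \frac{d+1+2/(d+4)}{d+2} = \frac{(d+2)(d+4) - (d+1)(d+4) - 2}{(d+2)(d+4)} = \frac{(d+4)-2}{(d+2)(d+4)} = \frac{d+2}{(d+2)(d+4)} = \frac{1}{d+4}$. So both constraints force $\alpha = 1/(d+4)$ exactly, and the parameters fit with equality — this is the delicate point and the reason the hypothesis is stated with exactly $2/(d+4)$. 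With $\alpha = 1/(d+4)$, every vertex ends with charge at least $d+1 + 2/(d+4)$, so $2|E(G)| \ge (d+1+2/(d+4))|V(G)|$, contradicting the assumed strict inequality on the average degree. I would also note the edge case $|V(G)|$ small or $G$ disconnected does not matter since the argument is purely local and the average-degree hypothesis applies to $G$ itself (one may also invoke it on a suitable subgraph, but working with $G$ directly suffices here). The main obstacle is simply getting the bookkeeping on $\alpha$ right; there is essentially no slack, so the computation must be exact.
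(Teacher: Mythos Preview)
Your argument is correct and essentially identical to the paper's: both prove the contrapositive by discharging with each vertex of degree at least $d+2$ sending $1/(d+4)$ along incident edges, and both verify that low-degree vertices reach the threshold via their two high-degree neighbors while the $k=d+2$ case is tight for high-degree vertices. The only cosmetic differences are that the paper shifts the initial charge by $d+1+2/(d+4)$ (aiming for nonnegative final charge) and sends $1/(d+4)$ to \emph{all} neighbors rather than only to those of degree $d+1$, which makes no difference to the bounds.
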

\begin{proof}
Suppose for a contradiction that $G$ has minimum degree at least $d+1$ and that
each vertex of $G$ of degree $d+1$ has at least two neighbors of larger degree.
Let us assign charge $\deg(v)-(d+1+2/(d+4))$ to each vertex $v\in V(G)$; since the average degree
of $G$ is less than $d+1+2/(d+4)$, the sum of the charges is negative.  Now, each vertex of
degree at least $d+2$ sends $1/(d+4)$ of its charge to each adjacent vertex.
After this redistribution, the charge of each vertex $v$ of degree at least $d+2$
is at least
$$\deg(v)-\Bigl(d+1+\frac{2}{d+4}\Bigr)-\frac{\deg(v)}{d+4}=\frac{(d+3)(\deg(v)-d-2)}{d+4}\ge 0.$$
Each vertex $v$ of degree $d+1$ starts with charge $-2/(d+4)$ and receives charge $1/(d+4)$
from at least two of its neighbors, and thus the final charge of $v$ is non-negative.
Hence, the sum of the charges is non-negative, which is a contradiction since the
redistribution did not change the total amount of charge.
\end{proof}

To prove Theorem~\ref{t:mad}, let us first consider its special case where the requests form an independent set.

\begin{lemma}\label{l:madind}
Let $d\ge 2$ be an integer, let $G$ be a graph of maximum average degree at most $d$ such that $G$ is $(d-1)$-choosable,
let $L$ be an assignment of lists of size at least $d$ for $G$, and let $r$ be a request.
If $\brm{dom}(r)$ is an independent set in $G$, then $r$ is $\frac{1}{2d^{2d}}$-satisfiable.
\end{lemma}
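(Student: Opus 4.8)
The plan is to split the request $r$ into a small number of pieces, each of which can be satisfied in full (or in large part) by a single $L$-coloring, and then take the best such coloring. Since $\brm{dom}(r)$ is independent, the obstruction to satisfying a vertex $v\in\brm{dom}(r)$ with color $r(v)$ is purely that $r(v)$ must be avoided on the neighborhood $N(v)$ while we still want a proper $L$-coloring of the rest. The key structural input is that $G$ has maximum average degree at most $d$, so a counting argument lets us partition $\brm{dom}(r)$ into $O(1)$ classes (depending only on $d$) inside each of which the vertices are ``far enough apart'' or ``low-degree enough'' that their requests can be honored simultaneously.

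First I would use the maximum-average-degree bound to control degrees in $\brm{dom}(r)$. The vertices of $\brm{dom}(r)$ of degree at least $d$ cannot be too numerous: since $G$ has average degree at most $d$, at most half the vertices can have degree $\ge 2d$ in any subgraph argument, and more carefully, one can bound the number of ``high-degree'' request vertices as a fraction of $|\brm{dom}(r)|$. Concretely, I would show that at least, say, a $1/(2d)$ fraction of $\brm{dom}(r)$ has the property that one can locally delete its closed neighborhood and still $(d-1)$-color the remainder — the $(d-1)$-choosability hypothesis is exactly what lets us recolor after removing the color $r(v)$ from neighbors' lists, as in Lemma~\ref{lemma:degcol}. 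The low-degree request vertices (degree $\le d-1$) are the easy case: since lists have size $\ge d$, removing the single color $r(v)$ used by such a vertex still leaves each neighbor with a list larger than its degree into the remaining request-free part, so all of these can be satisfied at once via a greedy/degeneracy argument.

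Next I would handle the request vertices of degree exactly $d$ or more. For these I would argue that we can still satisfy a constant fraction simultaneously by the following scheme: color $\brm{dom}(r)$ first according to $r$ on the chosen subset $D'\subseteq\brm{dom}(r)$, delete the colors $\{r(v):v\in D'\}$ from the lists of the neighbors $\bigcup_{v\in D'}N(v)$, and then observe that since $\brm{dom}(r)$ is independent, the graph $G-\brm{dom}(r)$ plus these reduced lists still admits an $L$-coloring provided each such neighbor $u$ loses at most $|L(u)|-(d-1)\ge 1$ colors and $G$ is $(d-1)$-choosable on $G - \brm{dom}(r)$. The point is that we must choose $D'$ so that no vertex $u\notin\brm{dom}(r)$ is adjacent to too many vertices of $D'$ — but since $\brm{dom}(r)$ is independent and the average degree is $\le d$, a simple probabilistic or greedy selection (include each request vertex independently with the right probability, or process them in degree order) yields such a $D'$ with $|D'|\ge |\brm{dom}(r)|/d^{O(1)}$. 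The explicit constant $\frac{1}{2d^{2d}}$ in the statement suggests the authors allow themselves a crude union bound here, which is fine: one only needs \emph{some} constant depending on $d$.

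The main obstacle I anticipate is the interaction between ``honoring many requests'' and ``still being able to $(d-1)$-color the rest.'' When several chosen request vertices share a common neighbor $u$, that vertex can have many colors removed from its list, potentially dropping below $d-1$ and destroying colorability. The technical heart of the proof is therefore a selection lemma guaranteeing a constant-fraction subset $D'$ of the independent set $\brm{dom}(r)$ such that every vertex outside $\brm{dom}(r)$ has at most $|L(u)| - (d-1) \le$ (a controlled number, here essentially $1$) neighbors in $D'$ — this is where the maximum-average-degree hypothesis is genuinely used, as opposed to mere degeneracy. Once that selection is in hand, $(d-1)$-choosability finishes the coloring, and averaging over the $O(1)$ pieces of the partition of $\brm{dom}(r)$ gives $\frac{1}{2d^{2d}}$-satisfiability.
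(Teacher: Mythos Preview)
Your proposal has a genuine gap. The approach hinges on a ``selection lemma'': find $D'\subseteq\brm{dom}(r)$ of constant density such that every vertex $u\notin\brm{dom}(r)$ has at most one neighbour in $D'$, so that deleting the requested colors of $D'\cap N(u)$ from $L(u)$ still leaves at least $d-1$ colors. This lemma is false under the stated hypotheses. Take $G=K_{1,n}$ with center $u$ and leaves $\brm{dom}(r)=\{v_1,\dots,v_n\}$; for any $d\ge 3$ this graph is $(d-1)$-choosable (it is a tree) and has maximum average degree $<2\le d$, yet every admissible $D'$ satisfies $|D'|\le 1$. Your separate handling of the ``low-degree'' request vertices (satisfy them all simultaneously by a greedy argument) fails in the same example: if the values $r(v_i)$ together use $d$ distinct colors that all lie in $L(u)$, then after fixing $\phi(v_i)=r(v_i)$ for every $i$ the center has no color left. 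Neither the maximum-average-degree bound nor a probabilistic inclusion of each $v_i$ rescues this, because a single high-degree common neighbour forces $|D'|=O(1)$ absolutely.

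The paper takes a completely different route that sidesteps the common-neighbour obstruction. After an easy induction reducing to the case where every vertex outside $\brm{dom}(r)$ has degree at least $d$ (whence the vertices of $\brm{dom}(r)$ have average degree at most $d$, so at least half of them have degree at most $2d$), one does not select request vertices at all. Instead, for each $u\notin\brm{dom}(r)$ independently delete a uniformly random color from $L(u)$, obtaining lists $L'$ of size $d-1$; by $(d-1)$-choosability there is an $L'$-coloring $\phi_0$. Now for $v\in\brm{dom}(r)$ with $\deg(v)\le 2d$, each neighbour $u$ has $r(v)\notin L'(u)$ with probability at least $1/d$, independently over neighbours; hence with probability at least $d^{-2d}$ no neighbour of $v$ is colored $r(v)$ under $\phi_0$, and $v$ can be recolored to $r(v)$. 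Linearity of expectation over the (at least $|\brm{dom}(r)|/2$) low-degree request vertices gives the bound. The crucial difference from your plan is that the randomness lives on the \emph{lists of the non-request vertices}, so a shared high-degree neighbour causes only correlation, not failure, and expectation is unaffected.
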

\begin{proof}
Let $\eps=\frac{1}{2d^{2d}}$.

We prove the claim by induction on the number of vertices of $G$.  If there exists a vertex $v\in V(G)\setminus\brm{dom}(r)$
of degree less than $d$, then by the induction hypothesis, there exists an $L$-coloring of $G-v$ that
$\eps$-satisfies $r$, and this coloring can be extended to $G$ by giving $v$ a color in $L(v)$ not appearing on its neighbors.

Hence, we can assume that all vertices of $V(G)\setminus\brm{dom}(r)$ have degree at least $d$.
Since $G$ has average degree at most $d$, the vertices in $\brm{dom}(r)$ have average degree at most $d$.
Consequently, less than half of the vertices of $\brm{dom}(r)$ has degree greater than $2d$.

Without loss of generality, we can assume that all lists assigned by $L$ have size exactly $d$.
For each vertex $v\in V(G)\setminus\brm{dom}(r)$, choose a color $c_v\in L(v)$ independently uniformly at random,
and let $L'(v)=L(v)\setminus \{c_v\}$. For $v\in \brm{dom}(r)$, let $L'(v)=L(v)$.  Since $G$ is $(d-1)$-choosable,
$G$ has an $L'$-coloring $\phi_0$.  Let $\phi$ be obtained from $\phi_0$ by, for each $v\in \brm{dom}(r)$ such that the color
$r(v)$ does not appear on the neighbors of $v$ in the coloring $\phi_0$, changing the color of $v$ to $r(v)$.

Consider a vertex $v\in \brm{dom}(r)$ of degree at most $2d$.  For each neighbor $u$ of $v$, the probability that $L'(u)$
contains $r(v)$ is at most $1-1/d$.  Hence, the probability that $\phi(v)=r(v)$ is at least $\frac{1}{d^{\deg(v)}}\ge\frac{1}{d^{2d}}=2\eps$.
Since at least half of the vertices of $\brm{dom}(r)$ have degree at most $2d$, we have
$$\brm{E}[|\{v\in \brm{dom}(r):\phi(v)=r(v)|]\ge\eps|\brm{dom}(r)|,$$
and thus with non-zero probability the coloring $\phi$ $\eps$-satisfies the request~$r$.
Consequently, $r$ is $\eps$-satisfiable.
\end{proof}

\begin{proof}[Proof of Theorem~\ref{t:mad}]
Let $\eps=\frac{1}{2(d-1)d^{2d}}$. Let $L$ be an assignment of lists of size at least $d$ to $G$ and let $r$ be a request.

Since $G$ has choosability at most $d-1$, it is $(d-1)$-colorable.  Considering the color class
with largest intersection with $\brm{dom}(r)$, it follows that $G$ contains an independent set $A$
such that $|A\cap \brm{dom}(r)|\ge |\brm{dom}(r)|/(d-1)$.  Let $r_1$ be the restriction of $r$
to $A$.  By Lemma~\ref{l:madind}, $r_1$ is $(d-1)\eps$-satisfiable, and thus $r$ is $\eps$-satisfiable.
\end{proof}

\section{Satisfying one request on degenerate graphs}\label{s:null}

In this Section we prove Theorem~\ref{t:degenerate1}. 
Let $G$ be a graph with a fixed ordering of its vertices $(v_1,v_2,\ldots,v_n)$. The \emph{graph polynomial of $G$} is defined as 
$$p_G(x_1,x_2,\ldots,x_n)=\prod_{v_iv_j \in E(G), \: i< j}(x_j -x_i).$$ 
Our main tool is a special case of Alon's Combinatorial Nullstellensatz. 

\begin{theorem}[Alon, Tarsi~\cite{alon1992colorings}]\label{t:nullstellensatz}
Let $G$ be a graph with vertex set $(v_1,v_2,\ldots,v_n)$. Suppose that the coefficient of $x_1^{d_1}\ldots x_n^{d_n}$ in $p_G(x_1,x_2,\ldots,x_n)$ is non-zero. Let $L$ be a list assignment for $G$ such that $|L(v_i)| \geq d_i+1$ for $i=1,2,\ldots, n$. Then $G$ is $L$-colorable.
\end{theorem}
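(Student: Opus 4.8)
The plan is to deduce the theorem from the coefficient form of the Combinatorial Nullstellensatz applied to the graph polynomial $p_G$. First I would record the structural fact that $p_G(x_1,\ldots,x_n)=\prod_{v_iv_j\in E(G),\,i<j}(x_j-x_i)$ is homogeneous of degree $|E(G)|$; consequently, if the coefficient of $x_1^{d_1}\cdots x_n^{d_n}$ is nonzero, then necessarily $\sum_i d_i=|E(G)|=\deg p_G$, so the monomial in question is automatically of top degree. This is exactly the hypothesis the Nullstellensatz needs, and it means no separate total-degree assumption must be imposed in the coloring application. The translation back to coloring is then immediate: a point $(c_1,\ldots,c_n)\in L(v_1)\times\cdots\times L(v_n)$ with $p_G(c_1,\ldots,c_n)\neq 0$ is precisely an assignment $\phi(v_i)=c_i$ in which no factor $c_j-c_i$ vanishes, i.e.\ a proper $L$-coloring.

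It remains to prove the coefficient form: if $f\in\mathbb{F}[x_1,\ldots,x_n]$ has total degree $\sum_i d_i$ and a nonzero coefficient on $\prod_i x_i^{d_i}$, and $S_1,\ldots,S_n\subseteq\mathbb{F}$ satisfy $|S_i|\ge d_i+1$, then $f$ is nonzero at some point of $S_1\times\cdots\times S_n$. For each $i$ set $g_i(x_i)=\prod_{s\in S_i}(x_i-s)$, a monic polynomial of degree $|S_i|$ that vanishes on $S_i$. The first step is a multivariate reduction: using the relation $x_i^{|S_i|}=x_i^{|S_i|}-g_i(x_i)$ to rewrite every occurrence of a high power of each $x_i$, I would produce a polynomial $\bar f$ with $\deg_{x_i}\bar f<|S_i|$ for all $i$ together with a representation $f-\bar f=\sum_{i=1}^n h_i g_i$, where each rewriting strictly lowers a degree so the process terminates, and the bookkeeping yields $\deg(h_i g_i)\le\deg f$.

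Suppose for contradiction that $f$ vanishes on the whole grid $S_1\times\cdots\times S_n$. Then $\bar f$ vanishes there as well, since each $g_i$ does; but a polynomial with $\deg_{x_i}\bar f<|S_i|$ for every $i$ that vanishes on a grid with $|S_i|$ points in coordinate $i$ must be identically zero (a clean induction on $n$: fixing the last $n-1$ coordinates gives a univariate polynomial of degree $<|S_1|$ vanishing at $|S_1|$ points, hence zero, so each of its coefficients vanishes on $S_2\times\cdots\times S_n$, and one recurses). Hence $f=\sum_i h_i g_i$, and I would compare the coefficient of $\prod_i x_i^{d_i}$ on both sides. This monomial has total degree $\sum_i d_i=\deg f$, the maximum possible, so only top-degree homogeneous parts contribute; the top-degree part of $g_i$ is $x_i^{|S_i|}$, whence every degree-$(\sum_i d_i)$ monomial of $h_i g_i$ carries $x_i$-exponent at least $|S_i|>d_i$ and therefore cannot equal $\prod_i x_i^{d_i}$. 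Thus the right-hand coefficient is $0$ while the left-hand one is nonzero, a contradiction.

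The main obstacle is the degree accounting in the division step: justifying both that the reduction terminates and that it yields $\deg(h_i g_i)\le\deg f$, and then using this to run the top-degree argument showing that no summand $h_i g_i$ can reach the monomial $\prod_i x_i^{d_i}$. Everything else — the grid-vanishing lemma and the final coefficient comparison — is routine once the representation $f=\sum_i h_i g_i$ with the stated degree bounds is in hand; the homogeneity of $p_G$ is the convenience that supplies $\sum_i d_i=\deg p_G$ for free in the coloring conclusion.
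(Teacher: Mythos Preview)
The paper does not prove this theorem: it is quoted (with attribution to Alon and Tarsi~\cite{alon1992colorings}) as a known tool and used as a black box in the proof of Theorem~\ref{t:degenerate1a}. So there is no proof in the paper to compare your proposal against.

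That said, your argument is correct and is essentially the standard proof of Alon's Combinatorial Nullstellensatz specialized to the graph polynomial. The observation that $p_G$ is homogeneous of degree $|E(G)|$, so that any nonvanishing monomial automatically has top total degree, is exactly what makes the Nullstellensatz hypothesis automatic here. The reduction modulo the $g_i$, the grid-vanishing lemma, and the top-degree coefficient comparison are all sound. One point you leave implicit is the identification of colors with elements of a field $\mathbb{F}$ (so that the lists $L(v_i)$ become subsets $S_i\subseteq\mathbb{F}$); this is routine---any sufficiently large field, or simply $\mathbb{R}$, will do---but is worth a sentence when writing the proof out in full.
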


While in most applications, including the original application
in~\cite{alon1992colorings}, the coefficient in Theorem~\ref{t:nullstellensatz}
is interpreted in terms of orientations of $G$, we find it equally convenient
to work directly with the graph polynomial.

Let us start by some preparatory work.  For an integer $d$, let $S_d$ denote the set of all permutations
of the set $\{1,\ldots,d\}$. Let $S^0_d$ denote the set of all bijections from $\{1,\ldots,d\}$ to $\{0,\ldots,d-1\}$.
\begin{lemma}\label{lemma-claim2}
Let $d$ be a positive integer and let $r:\{1,\ldots,d\}\to\mathbb{Z}_0^0$ be a function such that
$\sum_{i=1}^d r(i)=d$.  For $\pi \in S^0_d$, $\pi+r\in S_d$ if and only if
\begin{equation}\label{eq-claim2}
\pi(t)=\sum_{j:\pi(j)<\pi(t)} r(j)
\end{equation}
for every $t\in \{1,\ldots, d\}$ such that $r(t)>0$.
\end{lemma}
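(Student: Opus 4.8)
The plan is to absorb all reference to $\pi$ into a reindexing, reducing the claim to a transparent statement about nonnegative integer sequences. First I would set $t_k=\pi^{-1}(k)$ for $k\in\{0,1,\ldots,d-1\}$, so that $t_0,\ldots,t_{d-1}$ enumerates $\{1,\ldots,d\}$, and $a_k=r(t_k)$; thus $a_0,\ldots,a_{d-1}$ are nonnegative integers with $\sum_k a_k=d$. Writing $c_k=(\pi+r)(t_k)=k+a_k$, membership $\pi+r\in S_d$ becomes exactly the statement that $(c_0,\ldots,c_{d-1})$ is a permutation of $(1,2,\ldots,d)$, while the right-hand side of (\ref{eq-claim2}) evaluated at $t=t_k$ reads $k=\sum_{l<k}a_l$. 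So it suffices to prove: for nonnegative integers $a_0,\ldots,a_{d-1}$ of sum $d$, the sequence $(k+a_k)_{k=0}^{d-1}$ is a permutation of $(1,\ldots,d)$ if and only if $\sum_{l<k}a_l=k$ for every $k$ with $a_k>0$. Throughout I would use only the trivial bound $c_k=k+a_k\ge k$, which is strict exactly when $a_k>0$.

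For the forward implication, suppose $(c_k)_k$ is a permutation of $(1,\ldots,d)$ and fix $k$ with $a_k>0$; if $k=0$ there is nothing to prove, so assume $k\ge 1$. Since every index $l\ge k$ satisfies $c_l\ge l\ge k$, each of the $k-1$ values $1,\ldots,k-1$ must be attained by some index $l<k$; as there are exactly $k$ such indices and $c$ is injective, this forces $\{c_l:l<k\}=\{1,\ldots,k-1\}\cup\{v\}$ for a single value $v\ge k$. Comparing sums then gives $\sum_{l<k}a_l=\sum_{l<k}(c_l-l)=v$, so it remains to rule out $v>k$. If $v>k$, the value $k\in\{1,\ldots,d\}$ would have to equal $c_l$ for some $l$, which is impossible for $l<k$ (as $k\notin\{1,\ldots,k-1\}\cup\{v\}$), for $l=k$ (that would force $a_k=0$), and for $l>k$ (then $c_l\ge l>k$); this contradiction gives $v=k$, i.e. $\sum_{l<k}a_l=k$.

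For the converse, I would list the indices with $a_k>0$ as $0=k_1<k_2<\cdots<k_m$, the smallest being $0$ because $\sum_{l<k_1}a_l$ is an empty sum and the hypothesis forces it to equal $k_1$. Setting $b_i=a_{k_i}$, the hypothesis at $k=k_i$ reads $k_i=b_1+\cdots+b_{i-1}$, so $k_{i+1}=k_i+b_i$ for $1\le i\le m-1$; and since $\sum_i b_i=d$ the same relation holds with $k_{m+1}:=d$. Now $c_l=l$ for each index $l$ with $a_l=0$, and $c_{k_i}=k_i+b_i=k_{i+1}$. Because $0=k_1$ is active, the indices with $a_l=0$ all lie in $\{1,\ldots,d-1\}$ and $c$ fixes each of them, while $c$ maps $\{k_1,\ldots,k_m\}$ bijectively onto $\{k_2,\ldots,k_m,d\}$; these two image sets are disjoint and together exhaust $\{1,\ldots,d\}$, so $(c_k)_k$ is a permutation of $(1,\ldots,d)$, as required.

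The only step carrying real content is the short case analysis in the forward implication, which hinges on noticing that the inequality $c_l\ge l$ forces every ``small'' value of the permutation to be produced by a ``small'' index. I do not expect a genuine obstacle beyond keeping the reindexing straight and handling the degenerate case $k=0$ separately.
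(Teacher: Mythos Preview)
Your proof is correct and follows essentially the same route as the paper's: both directions hinge on the observation $\sigma(i)\ge\pi(i)$ (your $c_l\ge l$), which forces the small values of $\sigma$ to come from the indices with small $\pi$-value, and the converse describes $\sigma$ explicitly as fixing the inactive indices and cyclically shifting the active ones. The only difference is cosmetic: you begin by reindexing via $t_k=\pi^{-1}(k)$ so that $\pi$ disappears from the statement, whereas the paper carries $\pi$ through and works with the set $K=\pi^{-1}(\{0,\ldots,k-1\})$ directly; your reindexing makes the bookkeeping a bit cleaner but does not change the argument.
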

\begin{proof}
Let $\sigma=\pi+r$.

Suppose first that $\sigma\in S_d$ and consider $t\in \{1,\ldots, d\}$ such that $r(t)>0$.
Let $k=\pi(t)$ and $K=\pi^{-1}(\{0,1,\ldots,k-1\})$.  If $i\in \{1,\ldots,d\}\setminus K$,
then $\pi(i)\ge k$, and thus $\sigma(i)>k$ (when $\pi(i)=k$, we have $i=t$ and $\sigma(i)>\pi(i)=k$ since $r(t)>0$).
Consequently, $\sigma^{-1}(\{1,\ldots,k\})\subseteq K$, and since $|K|=k$, we have $\sigma^{-1}(\{1,\ldots,k\})=K$.
It follows that
\begin{align*}
\sum_{i\in K} r(i)&=\sum_{i\in K} (\sigma(i)-\pi(i))=\sum_{i\in K} \sigma(i)-\sum_{i\in K} \pi(i)\\
&=\sum_{s=1}^k s-\sum_{s=0}^{k-1} s=k=\pi(t),
\end{align*}
which is equivalent to (\ref{eq-claim2}).

Conversely, let $R=\{t\in \{1,\ldots, d\}:r(t)>0\}$ and suppose that (\ref{eq-claim2}) holds for every $t\in R$.
In particular, for $t\in R$ we have
$$\sigma(t)=\pi(t)+r(t)=\sum_{j\in R:\pi(j)\le \pi(t)} r(j),$$
and for $t=\argmax_{j \in R}\pi(j)$
$$\sigma(t)=\sum_{j\in R} r(j)=d$$
holds by the assumptions.
Consider any $s\in\{1,\ldots, d-1\}$ and let $t=\pi^{-1}(s)$.  If $t\not\in R$, then $\sigma(t)=\pi(t)=s$.
If $t\in R$, then consider $t'=\argmax_{j \in R, \pi(j)<s} \pi(j)$; note that some $j\in R$ such that
$\pi(j)<s$ exists, since $$1\le s=\pi(t)=\sum_{j \in R, \pi(j)<s} r(j)$$ by (\ref{eq-claim2}).
We have
$$\sigma(t')=\sum_{j\in R:\pi(j)\le \pi(t')} r(j)=\sum_{j\in R:\pi(j)<\pi(t)} r(j)=\pi(t)=s.$$
Consequently, $\{1,\ldots,d\}\subseteq \sigma(\{1,\ldots,d\})$, and thus $\sigma$ is a permutation
of $\{1,\ldots,d\}$.
\end{proof}

The \emph{sign} of a bijection $\pi\in S_d\cup S^0_d$ is defined as
$$\sgn(\pi)=(-1)^{|\{(i,j):1\le i<j\le d,\, \pi(i)>\pi(j)\}|}.$$

\begin{corollary}\label{cor-claim2-count}
Let $d$ be a positive integer and let $r:\{1,\ldots,d\}\to\mathbb{Z}_0^0$ be a function such that
$\sum_{i=1}^d r(i)=d$.  Let $R=\{t\in\{1,\ldots,d\}:r(t)>0\}$ and let $k=|R|$.
There are exactly $k!(d-k)!$ bijections $\pi \in S^0_d$ such that $\pi + r \in S_d$.
Furthermore,
$$\sgn(\pi)\sgn(\pi+r)=(-1)^{k+d}$$
for each such bijection $\pi$.
\end{corollary}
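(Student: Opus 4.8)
The plan is to feed the explicit description of the condition $\pi+r\in S_d$ supplied by Lemma~\ref{lemma-claim2} into two separate bookkeeping arguments, one for the count and one for the sign. Throughout, fix a valid $\pi$ (that is, $\pi\in S^0_d$ with $\pi+r\in S_d$), write $\sigma=\pi+r$, list $R=\{t_1,\ldots,t_k\}$ so that $\pi(t_1)<\pi(t_2)<\ldots<\pi(t_k)$, and set $R_i=r(t_1)+\ldots+r(t_i)$ with $R_0=0$; then $R_k=d$ and the $R_i$ are strictly increasing since $r(t_i)>0$.

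First, the count. For $t=t_i$ the only indices $j$ with $r(j)>0$ and $\pi(j)<\pi(t_i)$ are $t_1,\ldots,t_{i-1}$, so (\ref{eq-claim2}) says precisely $\pi(t_i)=R_{i-1}$; hence $\pi|_R$ is determined by the linear order in which the elements of $R$ are listed. Conversely, each of the $k!$ linear orders $(t_1,\ldots,t_k)$ of $R$ yields, via $\pi(t_i):=R_{i-1}$, an injection of $R$ into $\{0,\ldots,d-1\}$ (the values $R_0<R_1<\ldots<R_{k-1}$ are distinct and $R_{k-1}=d-r(t_k)\le d-1$) for which (\ref{eq-claim2}) holds, and distinct orders give distinct $\pi|_R$ since $\pi|_R$ recovers the order. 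On the complement $C=\{1,\ldots,d\}\setminus R$, (\ref{eq-claim2}) imposes no constraint, so $\pi|_C$ may be any of the $(d-k)!$ bijections from $C$ onto the remaining $d-k$ elements of $\{0,\ldots,d-1\}$. Hence there are exactly $k!\,(d-k)!$ valid $\pi$.

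Next, the sign. Let $I(\tau)=\{(i,j):1\le i<j\le d,\ \tau(i)>\tau(j)\}$, so that $\sgn(\tau)=(-1)^{|I(\tau)|}$, and note $|I(\pi)|+|I(\sigma)|\equiv|I(\pi)\triangle I(\sigma)|\pmod 2$; thus $\sgn(\pi)\sgn(\pi+r)=(-1)^{D}$, where $D$ is the number of pairs $\{i,j\}$ ordered oppositely by $\pi$ and $\sigma$. If $i,j\notin R$ then $\sigma(i)-\sigma(j)=\pi(i)-\pi(j)$, so no contribution. If $i=t_a,j=t_b\in R$ with $a<b$, then $\pi(i)=R_{a-1}<R_{b-1}=\pi(j)$ and $\sigma(i)=R_a\le R_{b-1}<R_b=\sigma(j)$, again no contribution. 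For a mixed pair $m\in R$, $\ell\notin R$, one checks using $r(m)>0$ and injectivity of $\sigma$ that $\pi$ and $\sigma$ disagree exactly when $\pi(m)<\pi(\ell)<\pi(m)+r(m)$; writing $m=t_a$, this integer interval is $\{R_{a-1}+1,\ldots,R_a-1\}$, which contains none of the values $R_0,\ldots,R_{k-1}$ that $\pi$ takes on $R$, so all $r(m)-1$ of its elements are attained by $\pi$ on $C$. Summing over $m$ gives $D=\sum_{m\in R}(r(m)-1)=d-k$, and therefore $\sgn(\pi)\sgn(\pi+r)=(-1)^{d-k}=(-1)^{k+d}$.

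The only genuinely delicate point is this final case analysis for the sign: recognizing that $\pi$ and $\sigma$ are never in conflict on a pair lying entirely inside $R$ or entirely inside $C$, and that for a mixed pair the conflicting partners of $m\in R$ are exactly the $r(m)-1$ elements of $C$ whose $\pi$-value falls in the ``gap'' between $R_{a-1}$ and $R_a$. Everything else is routine once Lemma~\ref{lemma-claim2} is in hand.
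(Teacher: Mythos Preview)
Your proof is correct. The counting argument is essentially identical to the paper's: your ordering $(t_1,\ldots,t_k)$ of $R$ plays the role of the bijection $\omega$ there, and in both cases one observes that Lemma~\ref{lemma-claim2} pins down $\pi|_R$ once this ordering is chosen, while $\pi|_C$ is unconstrained.

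For the sign, however, you take a genuinely different route. The paper factors the passage from $\pi$ to $\sigma$ as two explicit moves: first replace the value $0=\pi(\omega(1))$ by $d$ (which flips $d-1$ inversions), then compose with the $k$-cycle $(\omega(1)\,\omega(2)\,\ldots\,\omega(k))$ on the domain (contributing $(-1)^{k-1}$), whence $\sgn(\pi)\sgn(\sigma)=(-1)^{d-1}(-1)^{k-1}=(-1)^{k+d}$. You instead count the symmetric difference of inversion sets directly, splitting into pairs in $C\times C$, $R\times R$, and $R\times C$, and showing that only the mixed pairs contribute, each $m=t_a\in R$ picking up exactly the $r(m)-1$ elements of $C$ whose $\pi$-value lands in the gap $(R_{a-1},R_a)$. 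This is more hands-on but entirely self-contained: it avoids the slightly delicate claim that the ``replace $0$ by $d$, then cycle'' decomposition really does recover $\sigma$, at the cost of a three-way case analysis. Both arguments are short and either would be at home in the paper.
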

\begin{proof}
If $\omega:\{1,\ldots,k\}\to R$ is a bijection and $\pi\in S^0_d$ satisfies
\begin{equation}\label{eq-count}
\pi(\omega(i))=\sum_{j=1}^{i-1} r(\omega(j))
\end{equation}
for $i\in\{1,\ldots,k\}$, then $\pi+r\in S_d$ by Lemma~\ref{lemma-claim2} and $\pi(\omega(1))<\pi(\omega(2))<\ldots<\pi(\omega(k))$.
Conversely, for any bijection $\pi\in S^0_d$, there exists a unique bijection $\omega:\{1,\ldots,k\}\to R$
such that $\pi(\omega(1))<\pi(\omega(2))<\ldots<\pi(\omega(k))$, and if $\pi+r\in S_d$, then (\ref{eq-count}) holds by Lemma~\ref{lemma-claim2}.

We conclude that each bijection $\pi \in S^0_d$ such that $\pi + r \in S_d$ can be obtained by first choosing
a bijection $\omega:\{1,\ldots,k\}\to R$ (in one of $k!$ ways), fixing the values of $\pi$ on $R$ according to (\ref{eq-count}),
and choosing the rest of values arbitrarily (in $(d-k)!$ ways).  Hence, the number of such bijections is exactly $k!(d-k)!$.

Consider $\sigma=\pi+r$.  For $t\not\in R$ we have $\sigma(t)=\pi(t)$.  For $1\le i\le k-1$, we have
$$\sigma(\omega(i))=r(\omega(i))+\sum_{j=1}^{i-1} r(\omega(j))=\pi(\omega(i+1))$$
by (\ref{eq-count}),
and
$$\sigma(\omega(k))=\sum_{j=1}^k r(\omega(i))=d.$$  Furthermore, note that $\pi(\omega(1))=0$.
Hence, $\sigma$ is obtained from $\pi$ by first replacing $0$ by $d$ (multiplying the sign by $(-1)^{d-1}$),
then shifting the values cyclically on $R$ 
in order given by $\omega$ (multiplying the sign by $(-1)^{k-1}$).
Consequently, $\sgn(\pi)\sgn(\pi+r)=(-1)^{k+d}$.
\end{proof}

We are now ready to apply Theorem~\ref{t:nullstellensatz} to prove the following technical generalization of Theorem~\ref{t:degenerate1}.

\begin{theorem}\label{t:degenerate1a} Let $d\ge 2$ be an integer such that $d+1$ is  prime.
Let $G$ be a $d$-degenerate graph with vertex set $\{v_1,v_2,\ldots,v_n\}$. Let
$r:\{1,\ldots,n\}\to\bb{Z}^+_0$ satisfy
$\sum_{i=1}^nr(i)=d$. Let $L$ be a list assignment for $G$ such that $|L(v_i)|
\geq d+1-r(i)$ for $1 \leq i \leq n$. Then $G$ is $L$-colorable. 	
\end{theorem}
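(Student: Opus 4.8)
The plan is to prove the statement by an application of the Combinatorial Nullstellensatz (Theorem~\ref{t:nullstellensatz}). It suffices to produce a monomial $x_1^{d_1}\cdots x_n^{d_n}$ with $d_i\le d-r(i)$ for every $i$ --- so that $d_i+1\le d+1-r(i)\le |L(v_i)|$ --- whose coefficient in the graph polynomial $p_G$ is nonzero. In fact I would aim to find such a monomial whose coefficient is nonzero \emph{modulo $d+1$}. This is the point at which the hypothesis that $d+1$ is prime enters: the coefficient will turn out to be (up to sign) a product of factorials of the shape $k!(d-k)!$ arising in Corollary~\ref{cor-claim2-count}, and such products are invertible modulo the prime $d+1$ by Wilson's theorem $d!\equiv -1\pmod{d+1}$ together with $\binom{d}{k}\equiv(-1)^k\pmod{d+1}$.

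\paragraph{Normalization and induction.} First I would reduce to the case that $G$ is a maximal $d$-degenerate graph: fix a $d$-degeneracy order and add edges so that each vertex acquires exactly $\min(i-1,d)$ earlier neighbours, with $v_1,\dots,v_{d+1}$ forming a clique $K_{d+1}$; this preserves $d$-degeneracy and only shrinks the set of $L$-colorings, while leaving the hypothesis $|L(v_i)|\ge d+1-r(i)$ intact. The proof then goes by induction on $n$, peeling off the last vertex $v_n$, which has exactly $d$ neighbours, all of them earlier. Using the factorization $p_G=p_{G-v_n}\cdot\prod_{u\in N(v_n)}(x_n-x_u)$ and extracting the coefficient of $x_n^{d_n}$ from the second factor (an elementary symmetric polynomial in the variables $x_u$, $u\in N(v_n)$), the coefficient of a target monomial $x_n^{d_n}\prod_{i<n}x_i^{d_i}$ in $p_G$ becomes a signed sum, over subsets $A\subseteq N(v_n)$ with $|A|=d-d_n$, of the coefficients in $p_{G-v_n}$ of the monomials obtained from $\prod_{i<n}x_i^{d_i}$ by decreasing by one the exponents of the vertices in $A$.

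\paragraph{The two regimes.} If $r(n)=0$ I take $d_n=d$; then only $A=\emptyset$ contributes and the problem reduces cleanly to the same statement for $G-v_n$ (with $r$ restricted, then padded back up to total $d$ by increasing some values, which only weakens the list hypothesis). If $r(n)>0$ I take $d_n=d-r(n)$, and must cope with a genuine alternating sum over the $\binom{d}{r(n)}$ subsets $A$ of size $r(n)$; the idea here is to push the deficiency $r(n)$ down onto the clique $N(v_n)$, i.e.\ to apply induction to instances on $G-v_n$ whose request is $r$ modified on $A$, and to choose the remaining exponents $d_i$ so that the shifted monomials are ``permutation-type'' monomials of the Vandermonde kind which vanish for all but the intended configuration of $A$'s. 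Carried all the way down to the base clique $K_{d+1}$, the whole computation collapses to a coefficient of the Vandermonde determinant $p_{K_{d+1}}$ for the accumulated request; after the vertex carrying the largest part of the request is forced to the exponent $d$ minus that part, this is precisely the signed count of bijections $\pi\in S^0_d$ with $\pi+r^\ast\in S_d$ analyzed in Lemma~\ref{lemma-claim2}. Corollary~\ref{cor-claim2-count} evaluates this count as $(-1)^{k+d}k!(d-k)!$, which is nonzero modulo $d+1$ since $d+1$ is prime, so the coefficient is nonzero and Theorem~\ref{t:nullstellensatz} applies.

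\paragraph{Main obstacle.} The hard part is the bookkeeping in the regime $r(n)>0$: choosing the exponents $d_i$ so that the alternating sums produced while peeling off request-carrying vertices do not cancel, and checking that the recursion really bottoms out at a single explicitly computable Vandermonde coefficient matching the combinatorial setup of Lemma~\ref{lemma-claim2} (this is where the preparatory work on $S_d$, $S^0_d$ and the sign identity $\sgn(\pi)\sgn(\pi+r)=(-1)^{k+d}$ is spent). It is also exactly here that primality of $d+1$ is indispensable --- for composite $d+1$ the quantity $k!(d-k)!$ can be divisible by $d+1$, which is why this argument (and, as the paper notes, the question itself) does not extend to that case.
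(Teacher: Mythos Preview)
Your overall strategy is correct and matches the paper's: reduce to maximal $d$-degenerate graphs, apply the Combinatorial Nullstellensatz, induct on $n$ by peeling off $v_n$ via the factorization $p_G=p_{G-v_n}\cdot\prod_{u\in N(v_n)}(x_n-x_u)$, and bottom out at a Vandermonde computation that invokes Corollary~\ref{cor-claim2-count} and Wilson's theorem. The role of the primality of $d+1$ is also identified correctly.

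However, there is a genuine gap at exactly the point you flag as the ``main obstacle''. You propose to track a \emph{single} monomial and hope that, with the right choice of exponents $d_i$, the alternating sum over subsets $A\subseteq N(v_n)$ of size $r(n)$ collapses to a single nonvanishing term, so that the recursion ``bottoms out at a single explicitly computable Vandermonde coefficient''. This does not happen: in general all $\binom{d}{r(n)}$ terms contribute, each corresponding to a \emph{different} shifted request on $G-v_n$, and there is no choice of exponents that kills all but one of them. Consequently the induction hypothesis cannot be applied to a single coefficient.

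The paper's resolution is to abandon the search for a single monomial and instead track the signed sum
\[
c_G(h)\;=\;\sum_{\sigma\in S_d}\sgn(\sigma)\,c_G(h,\sigma),
\]
where $c_G(h,\sigma)$ is the coefficient of $\tfrac{1}{h}\prod_{i\le d}x_i^{\sigma(i)}\prod_{i>d}x_i^{d}$ in $p_G$ and $h=\prod_i x_i^{r(i)}$. Summing over all permutations $\sigma$ is precisely what absorbs the exponent shifts on the initial clique produced by the subsets $A$: one obtains the clean recursion $c_G(h)=(-1)^{r(n)}\sum_{|S|=r(n)}c_H(h_S)$, with each $c_H(h_S)\equiv -1\pmod{d+1}$ by induction, whence $c_G(h)\equiv(-1)^{r(n)+1}\binom{d}{r(n)}\equiv -1$. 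The base case $n=d$ is then exactly the signed count $\sum_{\pi\in S^0_d,\ \pi+r\in S_d}\sgn(\pi)\sgn(\pi+r)$ evaluated by Corollary~\ref{cor-claim2-count}. Since $c_G(h)\not\equiv 0$, some individual $c_G(h,\sigma)$ is nonzero, and the Nullstellensatz applies. This averaging-over-$\sigma$ device is the missing ingredient in your proposal.
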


\begin{proof}
Without loss of generality we assume that $G$ is a maximal $d$-degenerate graph, $n=|V(G)| \geq d$, and that the vertices $(v_1,v_2,\ldots,v_n)$ are
ordered so that $\{v_1,v_2,\ldots,v_d\}$ is a clique in $G$, and  the vertex $v_i$ has exactly $d$ neighbors in the set $\{v_j \: | \: j < i\}$ for
every $i > d$. Let $p_G(x_1,x_2,\ldots,x_n)$ be the graph polynomial of $G$.
Let $h=\prod_{i=1}^{n}x_i^{r(i)}$, and note that $h$ is a monic monomial in variables $x_1,x_2,\ldots,x_n$ of degree $d$.
Let $\sigma \in S_d$ be a permutation. We denote by $c_G(h,\sigma)$
the coefficient of the term $$\frac{1}{h}\prod_{i=1}^{d}x_i^{\sigma(i)}\prod_{i=d+1}^n x_i^{d}$$
in $p_G$. (Note that it is possible that the above product contains negative powers of some of the variables, in which case the coefficient is naturally zero.)
The degree of $x_i$ in the term above does not exceed $d-r(i)$ for every $1 \leq i \leq n$. Thus by Theorem~\ref{t:nullstellensatz}  if for every monomial $h$ as above there exists $\sigma \in S_d$ such that $c_G(h,\sigma) \neq 0$, then Theorem~\ref{t:degenerate1a} holds.

Let $$c_G(h)=\sum_{\sigma \in S_d} \sgn(\sigma)c_G(h,\sigma).$$
By the observation above, 
the next claim implies the theorem.

\vskip 5pt

\noindent
{\bf Claim 1:} $c_G(h) \equiv -1 \pmod{d+1}$.

\vskip 5pt
	
\noindent It remains to establish the claim. The proof is by induction on $n=|V(G)|$. 

Let us start with the base case $n=d$.  In this case $p_G(x_1,\ldots,x_d)$ is the Vandermonde polynomial, and thus
$$p_G(x_1,x_2,\ldots,x_d)=\sum_{\pi \in S^0_d}\sgn(\pi) x_1^{\pi(1)}x_2^{\pi(2)}\ldots x_d^{\pi(d)}.$$
Consequently,
$$
c_G(h,\sigma)=\begin{cases}
\sgn(\sigma-r)&\text{if $\sigma-r\in S^0_d$}\\
0&\text{otherwise}
\end{cases}
$$
and
\begin{equation}\label{e:base}
c_G(h) = \sum_{\pi \in S^0_d, \pi+r \in S_d} \sgn(\pi) \sgn(\pi+r).
\end{equation}
Since $d+1\ge 3$ is prime, $d$ is even.  Letting $k=|\{t\in\{1,\ldots,d\}:r(t)>0\}|$,
Corollary~\ref{cor-claim2-count} implies
\begin{equation}\label{e:binom}
c_G(h)=(-1)^k k! (d-k)! \equiv d(d-1)\ldots(d-k+1)(d-k)! = d! \equiv -1,
\end{equation}
where the congruences here and in the sequel are modulo $d+1$ and the last congruence is Wilson's theorem.
This finishes the proof of the base case of Claim 1.

For the induction step, let $H$ be the graph obtained from $G$ by deleting $v_n$, and let $v_{i_1},v_{i_2},\ldots,v_{i_d}$ be the neighbors of $v_n$. Then \begin{equation}\label{e:step}
p_G=(x_n - x_{i_1})(x_n - x_{i_2})\ldots(x_n-x_{i_d})p_H.
\end{equation}
For $S \subseteq \{1,2,\ldots,d\}$ of size $r(n)$ define $h_S = h x_n^{-r(n)}\prod_{j \in S} x_{i_j}$. From (\ref{e:step}) we have  
\begin{equation}\label{e:step2}
c_G(h,\sigma) = (-1)^{r(n)}\sum_{S \subseteq \{1,2,\ldots,d\}, \: |S|=r(n)}c_H(h_S,\sigma).
\end{equation}
By the induction hypothesis each summand in (\ref{e:step2}) is congruent to $-1$ modulo $d+1$, and we have
$$c_G(h,\sigma) \equiv (-1)^{r(n)+1}\binom{d}{r(n)} \equiv -1,$$
as $\binom{d}{k} \equiv  (-1)^{k}\pmod{d+1}$ for all $k$, as shown in (\ref{e:binom}).
 \end{proof} 

\section*{Acknowledgments}

The research leading to this paper was started at ``New Trends in Graph Coloring'' workshop held at Banff International Research Station.

%\bibliographystyle{siam}
%\bibliography{request}

\end{document}